\documentclass[11pt,a4paper]{article}
\usepackage{amsmath,bm,graphicx,booktabs}
\usepackage{geometry,float,lmodern}
\usepackage{multirow}
\usepackage[authoryear]{natbib}
\usepackage{algorithm,algorithmic}
\usepackage{caption}
\usepackage[colorlinks,citecolor=blue,urlcolor=blue]{hyperref}
\usepackage{setspace}
\usepackage{inputenc}
\usepackage[english]{babel}
\usepackage{amsthm}
\usepackage{color}
\usepackage{courier}
\usepackage{amssymb}
\usepackage{mathrsfs}
\usepackage{enumitem}
\usepackage[title,toc,page]{appendix}

\allowbreak

\providecommand{\keywords}[1]{\textbf{\textit{key words: }} #1}

\newtheorem{theorem}{Theorem}
\newtheorem{proposition}{Proposition}
\newtheorem{remark}{Remark}

\newtheorem{lemma}{Lemma}

\newcommand{\pcite}[1]{\citeauthor{#1}'s (\citeyear{#1})}

\title{Geometric ergodicity of P{\'o}lya-Gamma Gibbs sampler for Bayesian logistic regression with a flat prior }

\date{}

\author{ Xin Wang\thanks{Email: xinwang@iasate.edu}  \, and Vivekananda Roy\thanks{Email: vroy@iastate.edu} \\
Department of Statistics, Iowa State University, Ames, IA}

\begin{document}

\maketitle

\begin{abstract}
The Logistic regression model is the most popular model for analyzing
  binary data. In the absence of any prior information, an improper
  flat prior is often used for the regression coefficients in Bayesian
  logistic regression models. The resulting intractable posterior
  density can be explored by running \pcite{polson2013bayesian} data
  augmentation (DA) algorithm. In this paper, we establish that the
  Markov chain underlying \pcite{polson2013bayesian} DA algorithm is
  geometrically ergodic. Proving this theoretical result is
  practically important as it ensures the existence of central limit
  theorems (CLTs) for sample averages under a finite second moment
  condition. The CLT in turn allows users of the DA algorithm to
  calculate standard errors for posterior estimates.
\end{abstract}

\keywords{Central limit theorem, Data augmentation, Drift condition, Geometric rate, Markov chain, Posterior propriety}

\section{Introduction}
\label{sec:intro_logistic}

Let $(Y_1, Y_2, \dots, Y_n)$ denote the vector of Bernoulli random
variables and $\bm{x}_i$ be the $p \times 1$ vector of known covariates
associated with the $i$th observation for $i=1,\dots,n$.  Let
$\bm{\beta} \in \mathbb{R}^p$ be the unknown vector of regression
coefficients. A generalized linear model can be built
\citep{mcculloch2011model} with a link function that connects the
expectation of $Y_i$ with the covariate $\bm{x}_i$. One
popular link function is the logit link function, $F^{-1}(\cdot)$,
where $F$ is the cumulative distribution function of the standard logistic random variable, that is
$F(t) \equiv e^t/(1+e^t)$ for $t\in \mathbb{R}$. The logit link
function leads to the logistic regression model,
\[
F^{-1}\left(P(Y_i =1)\right) = \log \Bigg(\frac{P(Y_i =1)}{1-P(Y_i =1)}\Bigg)=\bm{x}_i^T\bm{\beta}.
\]
The popularity of the logistic regression model is due to the fact that
$P(Y_i = 1)$ has a closed form expression of
$\bm{x}_i^T\bm{\beta}$, and it is easy to interpret $\bm{\beta}$
in terms of odds ratio.

Let $\bm{y} = (y_1, y_2, \dots, y_n)^T$ be the vector of observed
Bernoulli response variables. The likelihood function for $\bm{\beta}$
is
\[
L\left(\bm{\beta}|\bm{y}\right)=\prod_{i=1}^{n}\frac{\left[\exp\left(\bm{x}_{i}^{T}\bm{\beta}\right)\right]^{y_{i}}}{1+\exp\left(\bm{x}_{i}^{T}\bm{\beta}\right)}.
\]

In a Bayesian framework, when there is no prior information available
about the parameters, noninformative priors are generally used. A
popular method of analyzing binary data is by fitting a Bayesian
logistic regression model with a flat prior on $\bm{\beta}$. If the prior density of $\bm{\beta}$, 
$\pi\left(\bm{\beta}\right)\propto1$, the posterior density of
$\bm{\beta}$ is
\begin{equation}
\label{eq_marginal}
\pi\left(\bm{\beta}|\bm{y}\right)=\frac{L\left(\bm{\beta}|\bm{y}\right) \pi\left(\bm{\beta}\right)}{c\left(\bm{y}\right)}=\frac{1}{c\left(\bm{y}\right)}\prod_{i=1}^{n}\frac{\left[\exp\left(\bm{x}_{i}^{T}\bm{\beta}\right)\right]^{y_{i}}}{1+\exp\left(\bm{x}_{i}^{T}\bm{\beta}\right)},
\end{equation}
provided the marginal density
\[
c\left(\bm{y}\right)=\int_{\mathbb{R}^{p}}\prod_{i=1}^{n}\frac{\left[\exp\left(\bm{x}_{i}^{T}\bm{\beta}\right)\right]^{y_{i}}}{1+\exp\left(\bm{x}_{i}^{T}\bm{\beta}\right)}d\bm{\beta} < \infty.
\]
\cite{chen2001propriety} discuss the necessary and sufficient conditions
for propriety of the posterior density \eqref{eq_marginal}, that is, when
$c(\bm{y}) < \infty$. These conditions are given in 
  \ref{app_chen_condition}.  Throughout this paper, we assume that
the posterior density \eqref{eq_marginal} is proper.

From \eqref{eq_marginal}, we know that the posterior density of
$\bm{\beta}$, $\bm{\pi}(\bm{\beta}\vert \bm{y})$, is intractable in
the sense that means with respect to this density are not available in
closed form. Markov chain Monte Carlo (MCMC) algorithms are generally
used for exploring this posterior density. The data augmentation (DA)
algorithm proposed in \cite{albert1993bayesian} for the Bayesian probit
regression model is widely used. For the logistic regression model,
there have been many attempts to produce such a
DA algorithm \citep{holmes2006bayesian, fruhwirth2010data}. Recently, \cite{polson2013bayesian} (denoted as PS\&W hereafter) have proposed a
new DA algorithm for the logistic regression model based on latent variables following the  P{\'o}lya-Gamma (PG) distribution. As mentioned in \cite{choi2013polya}, PS\&W's algorithm is the first DA algorithm for the logistic regression that is truly
analogous to \pcite{albert1993bayesian} DA algorithm. PS\&W's DA algorithm, like \pcite{albert1993bayesian} DA
for the probit model, in every iteration makes two draws --- one draw from
a $p-$dimensional normal distribution for $\bm{\beta}$ and the other
draw for the latent variables. We now describe these two steps.

Let $\bm{X}$ denote the $n\times p$ design matrix with $i$th row
$\bm{x}_i^T$. Let $\mathbb{R}_{+} = (0,\infty)$ and for
$\left( \omega_1,\omega_2,\dots,\omega_n\right) \in \mathbb{R}_+^n$,
define $\bm{\Omega}$ to be the $n\times n$ diagonal matrix with $i$th
diagonal element $\omega_i$. Finally let $PG(1,b)$ denote the
P\'olya-Gamma distribution defined in Section \ref{sec_poly} with
parameters 1 and $b$. A single iteration of PS\&W's algorithm uses the following
two steps to move from $\bm{\beta}^{\prime}$ to $\bm{\beta}$.

\begin{algorithm}[H]
	\caption*{{\bf{PS\&W's algorithm}:}}
	\begin{algorithmic}[1]
			\STATE Draw $\omega_1,\dots, \omega_n$ independently with $\omega_{i}\sim\text{PG}\left(1,\left|\bm{x}_i^{T}\bm{\beta}^{\prime}\right|\right)$.
		\STATE Draw $\bm{\beta}\sim N\left(\left(\bm{X}^{T}\bm{\Omega}\bm{X}\right)^{-1}\bm{X}^{T}\bm{\kappa},\left(\bm{X}^{T}\bm{\Omega}\bm{X}\right)^{-1}\right)$, where $\bm{\kappa} = \left(\kappa_1,\dots,\kappa_n\right)^T$ with $\kappa_i  = y_i - 1/2$.
	\end{algorithmic}
\end{algorithm}

PS\&W provided an efficient method for sampling from the P{\'o}lya-Gamma
distribution. It can be shown that the transition density of the
 Markov chain underlying the above DA algorithm is strictly positive
everywhere, which implies the chain is Harris ergodic
\citep{asmussen2011new}. Thus the sample averages based on the DA
chain can be used to consistently estimate posterior means. However,
in order to provide standard errors for these estimates one needs to
show the existence of Markov chain CLTs for these
estimators. A standard method of establishing Markov chain CLT
is by proving the chain to be {\it geometrically ergodic}
\citep{roberts1997geometric}. Geometric ergodicity also allows consistent estimation of asymptotic variances in Markov chain CLTs by batch means or spectral variance methods
\citep{flegal2010batch}. This in turn allows the MCMC users to decide
how long to run MCMC simulations \citep{jones2001honest}. Thus proving
geometric ergodicity has important practical benefits. In this paper,
we prove that the Markov chain underlying PS\&W's DA algorithm is geometrically ergodic.

\cite{choi2013polya} considered normal priors on the regression
parameters and proved uniform ergodicity of the corresponding
P{\'o}lya-Gamma DA Markov chain by establishing a minorization
condition.  \cite{choi2017analysis} considered the one-way logistic
ANOVA model under a flat prior on group (treatment) main effects and
showed that the Markov operator corresponding to
P{\'o}lya-Gamma sampler is trace-class. The assumption of the one-way logistic
ANOVA model is restrictive and has limited applications. Here, we analyze the convergence rate
of PS\&W's DA algorithm for Bayesian logistic regression models with a general form of the design
matrix under a flat prior on regression coefficients. In particular, we establish that PS\&W's DA algorithm for the Bayesian logistic regression model under the improper flat prior is always geometrically ergodic. The conditions we need are only the conditions of Proposition \ref{prop1}
in \ref{app_chen_condition}, which guarantee the posterior
propriety. Since we use {\it drift condition} to prove geometric
ergodicity of the DA algorithm and hence CLTs for sample average estimators, the techniques used here are
different from that of \cite{choi2013polya} and
\cite{choi2017analysis}.

The rest of the paper is organized as follows. In
Section~\ref{sec_poly}, we describe PS\&W's Gibbs
sampler. Section~\ref{sec:ge} contains a brief discussion on geometric
rate of convergence for Markov chains and a proof of geometric
ergodicity of PS\&W's Gibbs sampler. Some concluding remarks are
given in Section~\ref{sec:disc}. Finally, the appendix contains some
technical results.

\section{PS\&W's Gibbs sampler}
\label{sec_poly}

In PS\&W's DA algorithm, latent variables with the P\'olya-Gamma
distribution are introduced. The probability density function for a
P\'olya-Gamma random variable with parameters $a >0$ and $b \geq 0 $
is,
\begin{equation}
\label{eq_polya}
 f\left(w|a,b\right)=\cosh^a\left(b/2\right)\frac{2^{a-1}}{\Gamma(a)}\sum_{n=0}^{\infty}\left(-1\right)^{n}\frac{\Gamma(n+a)}{\Gamma(n+1)}\frac{\left(2n+a\right)}{\sqrt{2\text{\ensuremath{\pi}}w^{3}}}e^{-\frac{\left(2n+a\right)^{2}}{8w}-\frac{b^{2}}{2}w}, w>0.
 \end{equation}
 We write $W\sim PG(a,b)$. (Recall that the hyperbolic cosine function $\cosh$ is defined as $\cosh(t) = \left( e^t + e^{-t}\right)/2$.)

 \cite{choi2013polya} developed a new way to formulate PS\&W's DA
 algorithm, which we briefly describe now. Let
 $\bm{\omega} = (\omega_1,\dots,\omega_n)^T$ be the latent
 variables. Assume that, conditional on $\bm{\beta}$, $Y_i$ and
 $\omega_i$ are independent with
 $Y_i \sim \text{Bernoulli} (F(\bm{x}_i^T\bm{\beta}))$ and
 $\omega_i \sim PG(1,\vert \bm{x}_i^T\bm{\beta} \vert)$. Also,
 conditional on $\bm{\beta}$, let $\{(Y_i, \omega_i), i=1,\dots,n\}$
 be $n$ independent pairs. Then the {\it complete posterior} density
 of $\bm{\beta}$ and $\bm{\omega}$ is
\begin{equation}
\label{eq_joint1}
\pi\left(\bm{\beta},\bm{\omega}|\bm{y}\right) =  \frac{\left[\prod_{i=1}^n P(Y_i = y_i\vert \bm{\beta})\right] \left[\prod_{i=1}^n f(\omega_i\vert 1, \vert \bm{x}_i^T\bm{\beta} \vert)\right] \pi(\bm{\beta})}{c(\bm{y})}.
\end{equation}
Clearly from \eqref{eq_marginal} we see that,
\[
\int_{\mathbb{R}^{n}}\pi\left(\bm{\beta},\bm{\omega}|\bm{y}\right)d\bm{\omega}=\pi\left(\bm{\beta}|\bm{y}\right),
\]
that is, the $\bm{\beta}$ marginal density of the augmented
posterior density $\pi(\bm{\beta},\bm{\omega}\vert \bm{y})$ is our
target posterior density $\pi(\bm{\beta}\vert \bm{y})$. 

Let
 $p\left(\omega_{i}\right)$ be the probability function of $PG(1,0)$
 and $\kappa_{i}=y_{i}-1/2$, as defined before. It can be checked that,
\begin{equation}
\label{eq_joint}
\pi\left(\bm{\beta},\bm{\omega}|\bm{y}\right) \propto   \prod_{i=1}^{n}\exp\left[\kappa_{i}\bm{x}_{i}^{T}\bm{\beta}-\omega_{i}\left(\bm{x}_{i}^{T}\bm{\beta}\right)^{2}/2\right]p\left(\omega_{i}\right).
\end{equation}
 PS\&W's DA algorithm is simply a two-variable
Gibbs sampler that, in each iteration, alternates draws from the two
 conditional distributions of
$\pi(\bm{\beta},\bm{\omega}\vert \bm{y})$. Below we present the
conditional densities of $\bm{\omega}$ given $\bm{\beta}$, $\bm{y}$ and $\bm{\beta}$ given $\bm{\omega}$, $\bm{y}$.

From \eqref{eq_joint1} we see that
\begin{equation}
\label{eq:pg_omega}
\omega_{i}|\bm{\beta}, \bm{y}  \overset{\text{ind}}\sim\text{PG}\left(1,\left|\bm{x}_i^{T}\bm{\beta}\right|\right), \text{for } i=1,\dots,n, 
\end{equation}
that is, the conditional distribution of $\bm{\omega}$ given $\bm{\beta}$, $\bm{y}$ is independent of $\bm{y}$.
Thus the conditional density of $\bm{\omega}$ given $\bm{\beta}$, $\bm{y}$ is 
\begin{equation}
\label{eq:cond_omega}
\pi\left(\bm{\omega}|\bm{\beta},\bm{y}\right)\propto\prod_{i=1}^{n}\exp\left[-\omega_{i}\left(\bm{x}_{i}^{T}\bm{\beta}\right)^{2}/2\right]p\left(\omega_{i}\right).
\end{equation}
 From
\eqref{eq_joint}, it is easy to see that the conditional density
of $\bm{\beta}$ is
\begin{equation}
\label{eq:cond_beta}
\pi\left(\bm{\beta}|\bm{\omega},\bm{y}\right)\propto\exp\left[-\frac{1}{2}\bm{\beta}^{T}\bm{X}^{T}\bm{\Omega}\bm{X}\bm{\beta}+\bm{\beta}^{T}\bm{X}^{T}\bm{\kappa}\right],
\end{equation}
where $\bm{\kappa}=\left(\kappa_{1},\dots,\kappa_{n}\right)^{T}$. Thus the conditional distribution of $\bm{\beta}$ is multivariate normal. In particular,
\begin{equation}
\label{eq:normal_beta}
\bm{\beta}|\bm{\omega},\bm{y}  \sim N\left(\left(\bm{X}^{T}\bm{\Omega}\bm{X}\right)^{-1}\bm{X}^{T}\bm{\kappa},\left(\bm{X}^{T}\bm{\Omega}\bm{X}\right)^{-1}\right).
\end{equation}

\section{Geometric ergodicity of P\'olya-Gamma Gibbs sampler}
\label{sec:ge}
Let $\{\bm{\beta}^{(m)}, \bm{\omega}^{(m)} \}_{m=0}^{\infty}$ denote
the Markov chain associated with PS\&W's DA algorithm. In Bayesian
logistic regression models, inferences on $\bm{\beta}$ are made based on
the $\{\bm{\beta}^{(m)} \}_{m=0}^{\infty}$ sub-chain. As mentioned in
the introduction, the DA Markov chain is Harris ergodic. Let
$h:\mathbb{R}^p \rightarrow \mathbb{R}$ be a real valued function of $\bm{\beta}$ with
$\int_{\mathbb{R}^p} \vert h(\bm{\beta}) \vert \pi (\bm{\beta} \vert
\bm{y}) d\bm{\beta} < \infty $, then the posterior mean $E(h(\bm{\beta})|\bm{y})$ can be consistently
estimated by $\bar{h}_m = \sum_{i=0}^{m-1}h(\bm{\beta}^{(i)})/m$ for
any starting value $\bm{\beta}^{(0)}$ (see \ref{app_chen_condition}
for a discussion on the existence of finite moments for
\eqref{eq_marginal}).  We can build a CLT for $\bar{h}_m$ if there
exists a constant $\sigma^2_h \in (0,\infty)$ such that,
\begin{equation}
\label{eq:clt}
\sqrt{m}\left(\bar{h}_m - E(h(\bm{\beta})|\bm{y})\right)\overset{d}\rightarrow N\left(0,\sigma_h^2\right) \, \text{as } m\rightarrow \infty .
\end{equation}
Mere Harris ergodicity of the Markov chain $\{\bm{\beta}^{(m)}\}_{m=0}^{\infty}$  does not ensure that the CLT in
\eqref{eq:clt} holds. It turns out that the geometric rate of convergence
defined below guarantees the CLT under a finite second moment
condition \citep{roberts1997geometric}. Also it turns out that all the three
Markov chains
$\{\bm{\beta}^{(m)}, \bm{\omega}^{(m)} \}_{m=0}^{\infty}$,
$\{\bm{\beta}^{(m)}\}_{m=0}^{\infty}$ and
$\{\bm{\omega}^{(m)} \}_{m=0}^{\infty}$ have the same rate of
convergence \citep{robe:rose:2001}. Thus geometric ergodicity is a solidarity property of these three Markov chains. In this article we analyze the
$\{\bm{\omega}^{(m)} \}_{m=0}^{\infty}$ sub-chain denoted as
$\bm{\Psi}=\{ \bm{\omega}^{\left(m\right)}\}
_{m=0}^{\infty}$.  Let $\bm{\omega}^{\prime}$ be the current state and
$\bm{\omega}$ be the next state, then the Markov transition density
(Mtd) of $\bm{\Psi}$ is
\begin{equation}
\label{eq:mtd}
k\left(\bm{\omega}|\bm{\omega}^{\prime}\right)=\int_{\mathbb{R}^{p}}\pi\left(\bm{\omega}|\bm{\beta},\bm{y}\right)\pi\left(\bm{\beta}|\bm{\omega}^{\prime},\bm{y}\right)d\bm{\beta},
\end{equation}
where $\pi(\cdot|\cdot,\bm{y})$'s are the conditional densities
defined in \eqref{eq:cond_omega} and \eqref{eq:cond_beta}. Note that the Mtd of the $\{\bm{\beta}^{(m)}\}_{m=0}^{\infty}$  sub-chain (that is, when $\bm{\omega}$ is updated first) is similarly given by
\begin{equation*}
\label{eq:mtd_beta}
\tilde{k}\left(\bm{\beta}|\bm{\beta}^{\prime}\right)=\int_{\mathbb{R}_+^{n}}\pi\left(\bm{\beta}|\bm{\omega},\bm{y}\right)\pi\left(\bm{\omega}|\bm{\beta}^{\prime},\bm{y}\right)d\bm{\omega}.
\end{equation*}
 Let $\mathscr{B}$ denote the Borel $\sigma$-algebra of
$\mathbb{R}_{+}^{n}$ and $K(\cdot,\cdot)$ be the Markov transition
function corresponding to the Mtd $k(\cdot \vert \cdot)$ in
\eqref{eq:mtd}, that is, for any set $A \in \mathscr{B}$,
$\bm{\omega}^{\prime} \in \mathbb{R}_+^{n}$ and any $j=0,1,\dots,$
\begin{equation}
K(\bm{\omega}^{\prime}, A) = \mbox{Pr}(\bm{\omega}^{(j+1)} \in A | \bm{\omega}^{(j)} = \bm{\omega}^{\prime}) = \int_{A} k(\bm{\omega}| \bm{\omega}^\prime) d\bm{\omega}.
\end{equation}
Then the $m$-step Markov transition function is
$K^m(\bm{\omega}^{\prime}, A) = \mbox{Pr}(\bm{\omega}^{(m+j)} \in A |
\bm{\omega}^{(j)} = \bm{\omega}^{\prime})$. Let $\Pi(\cdot|\bm{y})$ be
the probability measure with density $\pi(\bm{\omega}|\bm{y})$, where
$\pi(\bm{\omega}|\bm{y}) = \int_{\mathbb{R}^p}
\pi(\bm{\beta},\bm{\omega}|\bm{y}) d\bm{\beta}$ and
$\pi(\bm{\beta},\bm{\omega}\vert \bm{y})$ is the joint density defined
in \eqref{eq_joint1}. The Markov chain $\bm{\Psi}$ is geometrically
ergodic if there exists a constant $0 < t <1$ and a function
$H: \mathbb{R}_+^{n} \mapsto [0,\infty)$ such that for any
$\bm{\omega} \in \mathbb{R}_+^{n}$, and $m \ge 1$,
\begin{equation}
\label{eq:ge}
||K^m(\bm{\omega},\cdot) - \Pi(\cdot|\bm{y})||_{ \mbox{{\small TV}}}:=\sup_{A\in \mathscr{B}} |K^m(\bm{\omega}, A) - \Pi(A|\bm{y})| \leq H(\bm{\omega}) t^m.
\end{equation}
Harris ergodicity of $\bm{\Psi}$ implies that
$||K^m(\bm{\omega},\cdot) - \Pi(\cdot|\bm{y})||_{ \mbox{{\small TV}}} \downarrow 0$ as
$m \rightarrow \infty$, while \eqref{eq:ge} guarantees its exponential
rate of convergence. Since the Markov chains
$\{ \bm{\beta}^{(m)} \}_{m=0}^{\infty}$ and
$\{ \bm{\omega}^{(m)} \}_{m=0}^{\infty}$ have the same rate of
convergence, \eqref{eq:ge} implies
$\{ \bm{\beta}^{(m)} \}_{m=0}^{\infty}$ is geometrically ergodic.
\cite{roberts1997geometric} show that since $\{ \bm{\beta}^{(m)} \}_{m=0}^{\infty}$ is reversible,
if \eqref{eq:ge} holds then there exists a CLT, that is, for any
$h:\mathbb{R}^p\rightarrow \mathbb{R}$ with
$E[ h(\bm{\beta})^2\vert \bm{y}] < \infty $, \eqref{eq:clt}
holds. Also, under \eqref{eq:ge} a consistent estimator of
$\sigma_h^2$ can be found by batch means or spectral variance methods
\citep{flegal2010batch}. The following theorem shows that the Markov chain
$\bm{\Psi}$ converges at a geometric rate.

\begin{theorem}
\label{them_logistic}
If the posterior density $\pi(\bm{\beta}\vert \bm{y})$ given in \eqref{eq_marginal} is proper, the Markov chain $\bm{\Psi}$ is geometrically ergodic.
\end{theorem}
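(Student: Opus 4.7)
My plan is to use the standard drift-plus-minorization route to geometric ergodicity: I aim to exhibit a measurable function $V:\mathbb{R}_{+}^{n}\to[0,\infty)$, constants $0\le\rho<1$ and $L<\infty$, and a threshold $d>2L/(1-\rho)$, such that (i) the drift condition $(KV)(\bm{\omega}')\le\rho V(\bm{\omega}')+L$ holds for every $\bm{\omega}'\in\mathbb{R}_{+}^{n}$, and (ii) the sublevel set $\{V\le d\}$ admits a one-step minorization: there exist $\varepsilon>0$ and a probability measure $Q$ on $\mathscr{B}$ with $K(\bm{\omega}',A)\ge\varepsilon\,Q(A)$ for all $A\in\mathscr{B}$ and all $\bm{\omega}'$ with $V(\bm{\omega}')\le d$. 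These two ingredients together imply the geometric bound \eqref{eq:ge}.

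To construct the drift I would exploit the two-step structure of \eqref{eq:mtd}. Conditional on $\bm{\beta}$, the coordinates $\omega_i$ are independent $PG(1,|\bm{x}_i^T\bm{\beta}|)$; conditional on $\bm{\omega}'$, the vector $\bm{\beta}$ is Gaussian with mean $(\bm{X}^T\bm{\Omega}'\bm{X})^{-1}\bm{X}^T\bm{\kappa}$ and covariance $(\bm{X}^T\bm{\Omega}'\bm{X})^{-1}$. I would take $V(\bm{\omega})=\sum_{i=1}^{n}g(\omega_i)$ with $g$ chosen so that $E[g(\omega_i)\mid\bm{\beta}]$ is an explicit function $\psi(\bm{x}_i^T\bm{\beta})$; the Laplace transform identity
\[
E\bigl[e^{-t^{2}\omega/2}\,\big|\,\omega\sim PG(1,b)\bigr]=\frac{\cosh(b/2)}{\cosh(\sqrt{b^{2}+t^{2}}/2)},
\]
which follows from the tilted-mixture representation $f(w\mid 1,b)=\cosh(b/2)e^{-b^{2}w/2}p(w)$, is the main analytic device for producing tractable $\psi$. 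Then $(KV)(\bm{\omega}')=\sum_{i=1}^{n}E[\psi(\bm{x}_i^T\bm{\beta})\mid\bm{\omega}']$ is a Gaussian integral in the one-dimensional linear functionals $\bm{x}_i^T\bm{\beta}$, whose mean and variance are read off from the normal distribution above. The target is to engineer $g$ so that this integral is at most $\rho g(\omega_i')+L/n$ uniformly in $\bm{\omega}'$, producing coordinate-wise contractions that sum to the global drift.

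For the minorization, since the Mtd $k(\bm{\omega}\mid\bm{\omega}')$ is strictly positive and jointly continuous in $(\bm{\omega},\bm{\omega}')$, I would use the classical device $\varepsilon\,Q(A)=\int_{A}\inf_{\bm{\omega}'\in\{V\le d\}}k(\bm{\omega}\mid\bm{\omega}')\,d\bm{\omega}$. Provided $V$ blows up both as any $\omega_i\to 0$ and as any $\omega_i\to\infty$, the sublevel set $\{V\le d\}$ is compact inside $\mathbb{R}_{+}^{n}$, and a dominated-convergence argument applied to the integral of the infimum yields $\varepsilon>0$.

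The hard part will be the drift step under the improper flat prior. When some coordinate of $\bm{\omega}'$ is near zero, $\bm{X}^{T}\bm{\Omega}'\bm{X}$ becomes nearly singular and the Gaussian step for $\bm{\beta}$ has large variance in certain directions, which tends to inflate $(KV)(\bm{\omega}')$; consequently $V$ must penalize both $\omega_i\to\infty$ and $\omega_i\to 0$ while still contracting in expectation. The Chen-Shao-type propriety conditions of Proposition \ref{prop1} are precisely what force the relevant Gaussian averages of $\psi$ to be finite (so that $L<\infty$), but converting that finiteness into a quantitative contraction rate $\rho<1$ uniform in $\bm{\omega}'$---via careful estimates on the hyperbolic functions appearing in the PG Laplace transform and on the Gaussian tails in each direction $\bm{x}_i$---is where I expect the bulk of the technical work to lie.
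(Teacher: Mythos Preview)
Your overall architecture (drift on the $\bm{\omega}$-chain, with sublevel sets that are compact in $\mathbb{R}_+^n$) is the same as the paper's, but there is a genuine gap in the drift step. You aim for a \emph{coordinate-wise} contraction: engineer $g$ so that $E[\psi(\bm{x}_i^T\bm{\beta})\mid\bm{\omega}']\le\rho\,g(\omega_i')+L/n$ for each $i$. This cannot work under the flat prior. The conditional variance $\sigma_i^2=\bm{x}_i^T(\bm{X}^T\bm{\Omega}'\bm{X})^{-1}\bm{x}_i$ is indeed bounded by $1/\omega_i'$, but the conditional mean $\mu_i=\bm{x}_i^T(\bm{X}^T\bm{\Omega}'\bm{X})^{-1}\bm{X}^T\bm{\kappa}$ depends on \emph{all} coordinates of $\bm{\omega}'$: if some $\omega_j'$ with $j\neq i$ is tiny, $|\mu_i|$ can be large even when $\omega_i'$ is moderate, so no function of $\omega_i'$ alone can dominate $E[\psi(\bm{x}_i^T\bm{\beta})\mid\bm{\omega}']$. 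The contraction has to be global, bounding $\sum_i E[\psi(\bm{x}_i^T\bm{\beta})\mid\bm{\omega}']$ against $\rho\sum_i g(\omega_i')$, with cross-coordinate coupling handled explicitly.

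The paper does exactly this. With the drift $V(\bm{\omega})=\alpha\sum_i\omega_i^{-1}+\sum_i\omega_i^{-1/2}+\sum_i\omega_i$, the inner PG expectation gives $E[V(\bm{\omega})\mid\bm{\beta}]\le 2\alpha\sum_i|\bm{x}_i^T\bm{\beta}|+\sqrt{2}\sum_i|\bm{x}_i^T\bm{\beta}|^{1/2}+C$. For the outer Gaussian step one needs $\sum_i|\mu_i|\le\rho_0\sum_i 1/\omega_i'$ with $\rho_0<1$; a naive projection bound only yields $\rho_0\le 1$. The crucial missing idea is a matrix lemma (Lemma~\ref{lem_rho} in the appendix): writing $\bm{\kappa}^T\bm{X}=-\tfrac12\bm{1}^T\bm{Z}$ and using the Chen--Shao condition $\bm{Z}^T\bm{e}=\bm{0}$ with $\bm{e}>0$, one shows via a compactness argument on the unit sphere that
\[
\sup_{\bm{\omega}'\in\mathbb{R}_+^n}\frac{\bm{1}^T\bm{Z}(\bm{Z}^T\bm{\Omega}'\bm{Z})^{-1}\bm{Z}^T\bm{1}}{\sum_{i}1/\omega_i'}=\rho_1<1,
\]
because the positive vector $\bm{\Lambda}\bm{e}$ lies in the null space of the relevant projection, forcing the quadratic form to be strictly below $1$ uniformly. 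This is where propriety enters: it is not merely that the Gaussian averages are finite (they always are), but that the $\bm{\kappa}$-direction is strictly shielded from the worst projection eigenvector, yielding the uniform strict contraction. Cauchy--Schwarz then gives $\sum_i|\mu_i|\le\tfrac12\sqrt{\rho_1}\sum_i 1/\omega_i'$, and a choice of $\alpha$ and an auxiliary constant closes the drift. (A minor point: the paper avoids the explicit minorization by verifying the Feller property of $K$ and invoking the unbounded-off-compact-sets criterion; your compactness-plus-infimum minorization would also work, but the Feller route is shorter here.)
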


\begin{remark}
  The conditions in Theorem \ref{them_logistic} are the same as the
  necessary and sufficient conditions for posterior propriety given in
  \ref{app_chen_condition}. Besides these two conditions, geometric
  ergodicity of $\bm{\Psi}$ does not need any other conditions.
\end{remark}

%\begin{remark}
%According to \cite{vats2017geometric}, we can find default starting values of $\bm{\omega}$ by minimizing $V(\bm{\omega})$ defined in \eqref{eq_drift}. We have 
%\[
%\frac{\partial V\left(\bm{\omega}\right)}{\partial\omega_{i}}=0\Rightarrow-\alpha\omega_{i}^{-2}-\frac{1}{2}\omega_{i}^{-3/2}-1=0. 
%\]
%Since $g\left(\omega\right)=-\alpha\omega^{-2}-\frac{1}{2}\omega^{-3/2}+1$
%is an increasing function, $\lim_{\omega\rightarrow0}g\left(\omega\right)=-\infty$, and
%$\lim_{\omega\rightarrow\infty}g\left(\omega\right)=1$, thus, there
%exists an solution, which is set to be the default starting value of $\omega_i$'s.
%\end{remark}

\begin{proof}[Proof of Theorem~\ref{them_logistic}]
We prove geometric ergodicity of $\bm{\Psi}$ by establishing a drift condition. In particular, we consider the drift function
\begin{equation}
\label{eq_drift}
V\left(\bm{\omega}\right)=\alpha\sum_{i=1}^{n}\frac{1}{\omega_{i}}+\sum_{i=1}^{n}\frac{1}{\sqrt{\omega_{i}}}+\sum_{i=1}^{n}\omega_{i},
\end{equation}
where $\alpha$ is a positive constant and show that for any $\bm{\omega},\bm{\omega}^{\prime} \in \mathbb{R}_+^n$, there exist some constants $\rho\in\left(0,1\right)$ and $L_0>0$ such that
\begin{equation}
\label{eq_drift_logistic}
E\left[V\left(\bm{\omega}\right)|\bm{\omega}^{\prime}\right]\leq\rho V\left(\bm{\omega}^{\prime}\right)+L_0.
\end{equation}
In \eqref{eq_drift_logistic} the expectation is with respect to the
Mtd $k(\bm{\omega} \vert \bm{\omega}^{\prime})$ defined in \eqref{eq:mtd}. Note that $V(\bm{\omega})$ is unbounded
off compact sets, that is, for any $a>0$, the set
$\{\bm{\omega}: V(\bm{\omega}) \leq a\}$ is compact. We now show that
$\bm{\omega}$-chain is a Feller chain, which means
$K\left(\bm{\omega},O\right)$ is a lower semi-continuous function on
$\mathbb{R}_{+}^{n}$ for each fixed open set $O$. Consider a sequence $\bm{\omega}_m$ with
$\bm{\omega}_m\rightarrow \bm{\omega}$ as $m \rightarrow \infty$. Note that,
\begin{align*}
\liminf_{m\rightarrow\infty} K\left(\bm{\omega}_{m},O\right) & =\liminf_{m\rightarrow\infty}\int_{O}k\left(\bm{\omega}|\bm{\omega}_{m}\right)d\bm{\omega}\\
 & =\liminf_{m\rightarrow\infty}\int_{O}\left[\int_{\mathbb{R}^{p}}\pi(\bm{\omega}|\bm{\beta},\bm{y})\pi(\bm{\beta}|\bm{\omega}_{m},\bm{y})d\bm{\beta}\right]d\bm{\omega}\\
 & \geq\int_{O}\int_{\mathbb{R}^{p}}\pi(\bm{\omega}|\bm{\beta},\bm{y})\liminf_{m\rightarrow\infty}\pi(\bm{\beta}|\bm{\omega}_{m},\bm{y})d\bm{\beta}d\bm{\omega},
\end{align*}
where the inequality follows from Fatou's lemma. Since $\pi(\bm{\beta}|\bm{\omega},\bm{y})$ is a continuous function in $\bm{\omega}$ and $\bm{\omega}_m\rightarrow \bm{\omega}$,
\begin{align*}
\liminf_{m\rightarrow\infty} K\left(\bm{\omega}_{m},O\right) & \geq\int_{O}\int_{\mathbb{R}^{p}}\pi(\bm{\omega}|\bm{\beta},\bm{y})\pi(\bm{\beta}|\bm{\omega},\bm{y})d\bm{\beta}d\bm{\omega}\\
 & =K\left(\bm{\omega},O\right).
\end{align*}
Thus by \cite{meyn1993markov}(chap. 15), \eqref{eq_drift_logistic}
implies that the Markov chain $\bm{\Psi}$ is geometrically ergodic.

Now we establish \eqref{eq_drift_logistic}. From the definition of the Mtd of $\bm{\Psi}$ in \eqref{eq:mtd}, it follows that
\begin{equation}
\label{eq:twosteps}
E\left[V\left(\bm{\omega}\right)|\bm{\omega}^{\prime}\right] = E \left\lbrace E\left[V\left(\bm{\omega}\right)\vert \bm{\beta},\bm{y}\right] \vert \bm{\omega}^{\prime} , \bm{y}\right\rbrace,
\end{equation}
where $E\left[ \cdot \vert \bm{\beta}, \bm{y} \right]$ denotes the expectation with respect to $\pi(\cdot \vert \bm{\beta},\bm{y})$ given in \eqref{eq:cond_omega} and $E\left\{ \cdot \vert \bm{\omega}^{\prime}, \bm{y} \right\}$ denotes the expectation with respect to $\pi(\cdot \vert \bm{\omega}^{\prime},\bm{y})$ given in \eqref{eq:cond_beta} .

We first evaluate the inner expectation in \eqref{eq:twosteps}, that is the expectation of $V\left(\bm{\omega}\right)$ with respect to $\pi\left(\bm{\omega}|\bm{\beta},\bm{y}\right)$. From \eqref{eq:pg_omega}, we know that $\omega_i\vert \bm{\beta},\bm{y} \sim PG\left( 1, \vert \bm{x}_i^T\bm{\beta} \vert \right)$. Thus by Lemma \ref{lem_exp} and Lemma \ref{lem_expinv} given in \ref{app_pg_props}, we have
\[
E\left(\omega_{i}|\bm{\beta},\bm{y}\right)=\frac{1}{2\left|\bm{x}_{i}^{T}\bm{\beta}\right|}\frac{\exp\left(\left|\bm{x}_{i}^{T}\bm{\beta}\right|\right)-1}{\exp\left(\left|\bm{x}_{i}^{T}\bm{\beta}\right|\right)+1}\leq\frac{1}{4},
\]

\[
E\left(\frac{1}{\omega_{i}}\mid\bm{\beta},\bm{y}\right)\leq2\left|\bm{x}_{i}^{T}\bm{\beta}\right|+L_{1}, \text{ and}
\]

\[
E\left(\frac{1}{\sqrt{\omega_{i}}}\mid\bm{\beta},\bm{y}\right)\leq\sqrt{2}\left|\bm{x}_{i}^{T}\bm{\beta}\right|^{1/2}+L_{2},
\]
where $L_1\equiv L(1)$,  $L_2 \equiv L(1/2)$ and $L(\cdot)$ is a function defined in Lemma \ref{lem_expinv}. Then
\begin{equation}
\label{eq:first_step}
E\left[V\left(\bm{\omega}\right)\mid\bm{\beta},\bm{y}\right]\leq2\alpha\sum_{i=1}^{n}\left|\bm{x}_{i}^{T}\bm{\beta}\right|+\sqrt{2}\sum_{i=1}^{n}\left|\bm{x}_{i}^{T}\bm{\beta}\right|^{1/2}+\alpha nL_{1}+nL_{2}+\frac{n}{4}.
\end{equation}

Now we consider the outer expectation in \eqref{eq:twosteps}, that is, the expectation with respect to $\pi(\bm{\beta}\vert \bm{\omega}^{\prime},\bm{y})$. Let
\[
\mu_{i}=\bm{x}_{i}^{T}\left(\bm{X}^{T}\bm{\Omega}^{\prime}\bm{X}\right)^{-1}\bm{X}^{T}\bm{\kappa},
\]
and \[
\sigma_{i}^{2}=\bm{x}_{i}^{T}\left(\bm{X}^{T}\bm{\Omega}^{\prime}\bm{X}\right)^{-1}\bm{x_{i}},
\]
where $\bm{\Omega}^{\prime}$ is the diagonal matrix with elements
$\omega^{\prime}_i$'s. From \eqref{eq:normal_beta} we know that
$\bm{x}_{i}^{T}\bm{\beta} \vert \bm{\omega}^{\prime}, \bm{y}\sim
N\left(\mu_{i},\sigma_{i}^{2}\right)$.  Then
$\left|\bm{x}_{i}^{T}\bm{\beta}\right|$ has a folded normal
distribution. Let $G(\cdot)$ denote the cumulative distribution function of the
standard normal random variable. So
\begin{equation}
\label{eq:folded_normal}
E\left(\left|\bm{x}_{i}^{T}\bm{\beta}\right|\mid\bm{\omega}^{\prime},\bm{y}\right)=\sigma_{i}\sqrt{\frac{2}{\pi}}e^{-\mu_{i}^{2}/2\sigma_{i}^{2}}+\mu_{i}\left(1-2G\left(-\frac{\mu_{i}}{\sigma_{i}}\right)\right)\leq\sigma_{i}\sqrt{\frac{2}{\pi}}+\left|\mu_{i}\right|.
\end{equation}

By the inequality in \cite{roy2010monte} [Lemma 3],
\begin{equation}
\label{eq:sigma_bound}
\sigma_{i}^{2}=\bm{x}_{i}^{T}\left(\omega_{i}^{\prime}\bm{x}_{i}\bm{x}_{i}^{T}+\sum_{j\neq i}\omega_{j}^{\prime}\bm{x}_{j}\bm{x}_{j}^{T}\right)^{-1}\bm{x}_{i} = \frac{1}{\omega_i^{\prime}}\bm{x}_{i}^{T}\left(\bm{x}_{i}\bm{x}_{i}^{T}+\sum_{j\neq i}\frac{\omega_{j}^{\prime}}{\omega_{i}^{\prime}}\bm{x}_{j}\bm{x}_{j}^{T}\right)^{-1}\bm{x}_{i}  \leq\frac{1}{\omega_{i}^{\prime}}.
\end{equation}
Also, \[
\sum_{i=1}^{n}\left|\mu_{i}\right|=\sum_{i=1}^{n}\left|\bm{x}_{i}^{T}\left(\bm{X}^{T}\bm{\Omega}^{\prime}\bm{X}\right)^{-1}\bm{X}^{T}\bm{\kappa}\right|=\bm{l}^{T}\bm{X}\left(\bm{X}^{T}\bm{\Omega}^{\prime}\bm{X}\right)^{-1}\bm{X}^{T}\bm{\kappa},
\]
where $\bm{l} = (l_1,\dots,l_n)$ with $l_{i}=1$ if $\mu_{i}\geq0$ and $l_{i}=-1$ if $\mu_{i}<0$. By Cauchy-Schwarz inequality, we have
\begin{align}
\label{eq:sum_mu}
\sum_{i=1}^{n}\left|\mu_{i}\right| & =\left|\bm{l}^{T}\bm{X}\left(\bm{X}^{T}\bm{\Omega}^{\prime}\bm{X}\right)^{-1/2}\left(\bm{X}^{T}\bm{\Omega}^{\prime}\bm{X}\right)^{-1/2}\bm{X}^{T}\bm{\kappa}\right| \nonumber\\
 & \leq\sqrt{\bm{l}^{T}\bm{X}\left(\bm{X}^{T}\bm{\Omega}^{\prime}\bm{X}\right)^{-1}\bm{X}^{T}\bm{l}}\sqrt{\bm{\kappa}^{T}\bm{X}\left(\bm{X}^{T}\bm{\Omega}^{\prime}\bm{X}\right)^{-1}\bm{X}^{T}\bm{\kappa}} \,.
\end{align}
Now \begin{align}
\label{eq:quad_l}
\bm{l}^{T}\bm{X}\left(\bm{X}^{T}\bm{\Omega}^{\prime}\bm{X}\right)^{-1}\bm{X}^{T}\bm{l} & =\bm{l}^{T}\left(\bm{\Omega}^{\prime}\right)^{-1/2}\left(\bm{\Omega}^{\prime}\right)^{1/2}\bm{X}\left(\bm{X}^{T}\bm{\Omega}^{\prime}\bm{X}\right)^{-1}\bm{X}^{T}\left(\bm{\Omega}^{\prime}\right)^{1/2}\left(\bm{\Omega}^{\prime}\right)^{-1/2}\bm{l} \nonumber \\
 & \leq\bm{l}^{T}\left(\bm{\Omega}^{\prime}\right)^{-1}\bm{l} = \sum_{i=1}^n\frac{1}{\omega_{i}^{\prime}},
\end{align}
where the inequality follows from the fact that $\bm{I} - \left(\bm{\Omega}^{\prime}\right)^{1/2}\bm{X}\left(\bm{X}^{T}\bm{\Omega}^{\prime}\bm{X}\right)^{-1}\bm{X}^{T}\left(\bm{\Omega}^{\prime}\right)^{1/2}$ is a positive semidefinite matrix.

Since the posterior density \eqref{eq_marginal} is assumed proper, the two conditions of Proposition \ref{prop1} given in \ref{app_chen_condition} hold. Thus by Lemma \ref{lem_rho} presented in \ref{app_matrix} and the facts  $\bm{x}_i\bm{x}_i^T = \bm{z}_i \bm{z}_i^T$, $\kappa_i \bm{x}_i = -(1/2)\bm{z}_i$, there exists a constant $\rho_1 \in (0,1)$ such that
\begin{equation}
\label{eq:rho1_ineq}
\bm{\kappa}^{T}\bm{X}\left(\bm{X}^{T}\bm{\Omega}^{\prime}\bm{X}\right)^{-1}\bm{X}^{T}\bm{\kappa}=\frac{1}{4}\bm{1}^{T}\bm{Z}\left(\bm{Z}^{T}\bm{\Omega}^{\prime}\bm{Z}\right)^{-1}\bm{Z}^{T}\bm{1}\leq\frac{1}{4}\rho_{1}\sum_{i=1}^{n}\frac{1}{\omega_{i}^{\prime}},
\end{equation}
where $\bm{Z}$ is defined in \ref{app_chen_condition}, and $\bm{1}$
is the $n\times 1$ vector of $1$'s.

Using \eqref{eq:sigma_bound} - \eqref{eq:rho1_ineq}, from \eqref{eq:folded_normal} we have
\begin{equation}
\label{eq:exp_bound}
E\left(\sum_{i=1}^{n}\left|\bm{x}_{i}^{T}\bm{\beta}\right||\bm{\omega}^{\prime},\bm{y}\right)\leq\frac{1}{2}\sqrt{\rho_{1}}\sum_{i=1}^{n}\frac{1}{\omega_{i}^{\prime}}+\sqrt{\frac{2}{\pi}}\sum_{i=1}^{n}\frac{1}{\sqrt{\omega_{i}^{\prime}}}.
\end{equation}
Using the inequality  $2ab \leq a^2 + b^2$, we have for any $c_1 >0$,
\begin{equation}
\label{eq:sqrt_bound1}
E\left(\sqrt{2}\left|\bm{x}_{i}^{T}\bm{\beta}\right|^{1/2}|\bm{\omega}^{\prime},\bm{y}\right)=E\left(2\frac{\sqrt{2}}{2c_{1}}c_{1}\left|\bm{x}_{i}^{T}\bm{\beta}\right|^{1/2}|\bm{\omega}^{\prime},\bm{y}\right)\leq c_{1}^{2}E\left(\left|\bm{x}_{i}^{T}\bm{\beta}\right|\vert\bm{\omega}^{\prime},\bm{y}\right)+\frac{1}{2c_{1}^{2}}.
\end{equation}
Using \eqref{eq:exp_bound} and \eqref{eq:sqrt_bound1}, we have
\begin{align}
\label{eq:sqrt_bound2}
E\left(\sqrt{2}\sum_{i=1}^{n}\left|\bm{x}_{i}^{T}\bm{\beta}\right|^{1/2}|\bm{\omega}^{\prime},\bm{y}\right) & \leq c_{1}^{2}\sum_{i=1}^{n}E\left(\left|\bm{x}_{i}^{T}\bm{\beta}\right|\vert\bm{\omega}^{\prime},\bm{y}\right)+\frac{n}{2c_{1}^{2}} \nonumber \\
 & \leq\frac{1}{2}c_{1}^{2}\sqrt{\rho_{1}}\sum_{i=1}^{n}\frac{1}{\omega_{i}^{\prime}}+c_{1}^{2}\sqrt{\frac{2}{\pi}}\sum_{i=1}^{n}\frac{1}{\sqrt{\omega_{i}^{\prime}}}+\frac{n}{2c_{1}^{2}}.
\end{align}
Combining \eqref{eq:first_step}, \eqref{eq:exp_bound} and \eqref{eq:sqrt_bound2},  from \eqref{eq:twosteps} we have
\[
E\left[V\left(\bm{\omega}\right)\mid\bm{\omega}^{\prime}\right]\leq\alpha\sqrt{\rho_{1}}\left(1+\frac{c_{1}^{2}}{2\alpha}\right)\sum_{i=1}^{n}\frac{1}{\omega_{i}^{\prime}}+\sqrt{\frac{2}{\pi}}\left(2\alpha+c_{1}^{2}\right)\sum_{i=1}^{n}\frac{1}{\sqrt{\omega_{i}^{\prime}}}+\frac{n}{2c_{1}^{2}}+\alpha nL_{1}+nL_{2}+\frac{n}{4}.
\]
We now show that there exist $c_1$ and $\alpha$ such that $\sqrt{\rho_{1}}\left(1+c_{1}^{2}/\left(2\alpha\right)\right)<1$ and $\sqrt{2/\pi}\left(2\alpha+c_{1}^{2}\right)<1$, that is
\begin{equation}
\label{eq:ineq_coef}
\frac{c_{1}^{2}}{2}\frac{\sqrt{\rho_{1}}}{1-\sqrt{\rho_{1}}}<\alpha<\frac{1}{2}\left(\sqrt{\frac{\pi}{2}}-c_{1}^{2}\right).
\end{equation}
So we need to show there exists $c_1$ such that $\sqrt{\frac{\pi}{2}}-c_{1}^{2}>c_{1}^{2}\sqrt{\rho_{1}}/\left(1-\sqrt{\rho_{1}}\right)$. Thus for any $c_1$ with $c_{1}^{2}<\sqrt{\pi/2}\left(1-\sqrt{\rho_{1}}\right)$, we can choose $\alpha$ satisfying \eqref{eq:ineq_coef}.  So there exist $c_{1}$ and $\alpha$ such that
\[
E\left[V\left(\bm{\omega}\right)|\bm{\omega}^{\prime}\right]\leq\rho V\left(\bm{\omega}^{\prime}\right)+L_0,
\]
where
\begin{align*}
\rho & =\max\left\{ \sqrt{\rho_{1}}\left(1+\frac{c_{1}^{2}}{2\alpha}\right),\sqrt{\frac{2}{\pi}}\left(2\alpha+c_{1}^{2}\right)\right\} <1,\\
L_0 & =\frac{n}{2c_{1}^{2}}+\alpha nL_{1}+nL_{2}+\frac{n}{4}.
\end{align*}
\end{proof}

\section{Summary}
\label{sec:disc}
In this article, we prove the geometric rate of convergence for the
\pcite{polson2013bayesian} P{\'o}lya-Gamma Gibbs sampler for the Bayesian
logistic regression with a flat prior on the regression coefficients
$\bm{\beta}$. The conditions for geometric ergodicity are the same as
the necessary and sufficient conditions for posterior propriety. That
means, the Gibbs sampler is always geometrically ergodic if the
posterior distribution is proper. If the posterior is improper, the Gibbs sampler is not even positive recurrent and the usual sample average estimator is inconsistent for the posterior mean \citep{athreya2014monte}. Thus our result guarantees availability of a CLT for the time average estimator as long as it is consistent.
  \cite{roy2007convergence} established a similar
result for \pcite{albert1993bayesian} DA algorithm for the Bayesian probit
regression model with a flat prior on $\bm{\beta}$. The latent
variables in \pcite{albert1993bayesian} DA algorithm are normal random
variables and their conditional (posterior) distributions are
truncated normal. Since the latent variables in
\pcite{polson2013bayesian} DA algorithm have the non-standard PG
distribution, it turns out the drift function, inequalities,
techniques used in our proof are quite different from those of
\cite{roy2007convergence}.  One potential future work is to study the
convergence properties of the P{\'o}lya-Gamma Gibbs sampler for Bayesian
logistic mixed models under improper priors for both regression
coefficients and variance components.

\section*{Appendix}
\renewcommand{\thesubsection}{A.\arabic{subsection}}

\subsection{\pcite{chen2001propriety} conditions for posterior propriety}
\label{app_chen_condition}

Let $\bm{X}$ denote the $n\times p$ design matrix with the $i$th row
$\bm{x}_i^T$ and $\bm{Z}$ be the $n\times p$ matrix with the $i$th row
$\bm{z}_i^T = c_i\bm{x}_i^T $, where $c_i = 1$ if $y_i = 0$ and
$c_i = -1$ if $y_i = 1$ for $i=1,\dots,n$. The following proposition
gives the necessary and sufficient conditions for propriety of the
posterior density \eqref{eq_marginal}.

\begin{proposition}
\label{prop1}
\citep{chen2001propriety}. The marginal density $c(\bm{y})$ is finite if and only if
\begin{enumerate}
\item $\bm{X}$ is a full rank matrix;
\item There exists a vector $\bm{e} = \left(e_1,\dots,e_n\right)^T$ with strictly positive components such that $\bm{Z}^T\bm{e} = \bm{0}_p$.
\end{enumerate}
\end{proposition}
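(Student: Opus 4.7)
\emph{Proof proposal.} My plan is to rewrite the integrand of $c(\bm{y})$ in the symmetric form
\[
\prod_{i=1}^n \frac{[\exp(\bm{x}_i^T\bm{\beta})]^{y_i}}{1+\exp(\bm{x}_i^T\bm{\beta})} = \prod_{i=1}^n \frac{1}{1+\exp(\bm{z}_i^T\bm{\beta})},
\]
using the identity $e^{u}/(1+e^{u}) = 1/(1+e^{-u})$ together with the definition $\bm{z}_i = (1-2y_i)\bm{x}_i$. The two conditions then become purely linear-algebraic statements about the rows of $\bm{Z}$, and the bridge between them is Stiemke's alternative: exactly one of (i) there exists $\bm{e}>0$ with $\bm{Z}^T\bm{e}=\bm{0}_p$, or (ii) there exists $\bm{u}$ with $\bm{Z}\bm{u}\leq\bm{0}$ and $\bm{Z}\bm{u}\neq\bm{0}$, holds. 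I would prove each direction of the iff separately.

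For sufficiency, I would use the elementary bound $1/(1+e^u)\leq e^{-\max(u,0)}$ to dominate the integrand by $\exp(-g(\bm{\beta}))$, where $g(\bm{\beta})=\sum_{i=1}^n\max(\bm{z}_i^T\bm{\beta},0)$ is convex and positively homogeneous of degree one. Finiteness of $\int e^{-g(\bm{\beta})}d\bm{\beta}$ then reduces via polar integration to showing $g>0$ on the unit sphere. If $g(\bm{u})=0$ at some unit $\bm{u}$, then $\bm{Z}\bm{u}\leq\bm{0}$; Stiemke's alternative, guaranteed by condition (2), rules out $\bm{Z}\bm{u}\neq\bm{0}$, hence $\bm{Z}\bm{u}=\bm{0}$; then condition (1), which gives full column rank of $\bm{Z}$ (the rank equals that of $\bm{X}$ because $\bm{Z}$ is obtained from $\bm{X}$ by row sign-flips), forces $\bm{u}=\bm{0}$, a contradiction. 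Compactness yields a uniform $c>0$ with $g(\bm{\beta})\geq c\|\bm{\beta}\|$, and the $p$-dimensional integral of $e^{-c\|\bm{\beta}\|}$ is finite.

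For necessity I handle the two conditions in turn. If (1) fails, I pick a nonzero $\bm{v}\in\ker\bm{Z}$; the integrand is invariant under $\bm{\beta}\mapsto\bm{\beta}+s\bm{v}$, so splitting $\mathbb{R}^p=\mathbb{R}\bm{v}\oplus(\mathbb{R}\bm{v})^\perp$ and applying Fubini makes $c(\bm{y})=\infty$ immediate. If (1) holds but (2) fails, Stiemke produces $\bm{u}$ with $\bm{Z}\bm{u}\leq\bm{0}$, $\bm{Z}\bm{u}\neq\bm{0}$. On the cylinder $C_\delta=\{t\bm{u}+\bm{w}:t\geq 1,\ \bm{w}\perp\bm{u},\ \|\bm{w}\|\leq\delta\}$, which has infinite Lebesgue measure, a judicious choice of $\delta$ (small enough that $\|\bm{z}_i\|\delta<|\bm{z}_i^T\bm{u}|$ for every $i$ with $\bm{z}_i^T\bm{u}<0$) makes $\bm{z}_i^T\bm{\beta}$ uniformly bounded above for every $i$: factors with $\bm{z}_i^T\bm{u}<0$ have their exponent driven toward $-\infty$, and factors with $\bm{z}_i^T\bm{u}=0$ are controlled by the $\|\bm{w}\|$ bound. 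Each factor $1/(1+\exp(\bm{z}_i^T\bm{\beta}))$ is then bounded below by a positive constant on $C_\delta$, so $c(\bm{y})=\infty$.

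The main obstacle is selecting and quoting the correct LP alternative. Stiemke's lemma is exactly the right tool here (rather than Farkas or Gordan), and the quantifiers must be handled with care: condition (2) with strict positivity $\bm{e}>\bm{0}$ is precisely the Stiemke alternative to the inequality $\bm{Z}\bm{u}\leq\bm{0},\ \bm{Z}\bm{u}\neq\bm{0}$ used in necessity, and its contrapositive is what rules out the bad unit vector in sufficiency. Once the alternative is correctly invoked, the convex-analytic homogeneity/polar-integration and the cylinder argument are routine bookkeeping.
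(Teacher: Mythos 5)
The paper itself offers no proof of Proposition~\ref{prop1}: it is stated as an imported result, with the reader referred to \cite{chen2001propriety}. So there is nothing internal to compare against; what you have written is, in effect, a self-contained substitute for the citation, and it is correct. Your reduction of the integrand to $\prod_i \bigl(1+\exp(\bm{z}_i^T\bm{\beta})\bigr)^{-1}$ is exactly the right normal form, and Stiemke's alternative is the right theorem of the alternative to pair with condition (2); the sufficiency argument (bound by $e^{-g(\bm{\beta})}$ with $g$ positively homogeneous, show $g>0$ on the sphere, conclude $g(\bm{\beta})\ge c\Vert\bm{\beta}\Vert$) and the two necessity arguments (Fubini along a kernel direction when (1) fails; a half-infinite cylinder on which every $\bm{z}_i^T\bm{\beta}$ stays bounded above when (2) fails) all go through. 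This is essentially the strategy of the original Chen--Shao proof. Two small remarks. First, in the sufficiency step you do not actually need the full strength of Stiemke to upgrade $\bm{Z}\bm{u}\le\bm{0}$ to $\bm{Z}\bm{u}=\bm{0}$: since $\bm{e}^T(\bm{Z}\bm{u})=(\bm{Z}^T\bm{e})^T\bm{u}=0$ and $\bm{e}$ is strictly positive while $\bm{Z}\bm{u}\le\bm{0}$ componentwise, the conclusion is immediate; Stiemke is genuinely needed only in the necessity direction, to manufacture the separating $\bm{u}$ when (2) fails. Second, ``full rank'' in condition (1) must be read as full \emph{column} rank ($\mathrm{rank}(\bm{X})=p$), since that is what makes $\bm{Z}\bm{u}=\bm{0}$ force $\bm{u}=\bm{0}$; this is also the reading required for the paper's later use of $(\bm{X}^T\bm{\Omega}\bm{X})^{-1}$.
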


\begin{remark}
\label{rem1_logistic}
\cite{roy2007convergence} provide a method for checking the second condition in Proposition~\ref{prop1}. This method can be easily implemented using publicly available software packages.
\end{remark}

\begin{remark}
\label{remark_app}
Since the moment generating function of the logistic distribution
exists from \cite{chen2001propriety}[Theorem 2.3], it follows that
under the two conditions of Proposition \ref{prop1},
$\int_{\mathbb{R}^p} e^{\delta \Vert \bm{\beta} \Vert} \pi (\bm{\beta
  \vert \bm{y}}) d\bm{\beta} < \infty$ for some $\delta >0$ and
$\int_{\mathbb{R}^p} \Vert \bm{\beta} \Vert^r \pi(\bm{\beta} \vert
\bm{y}) d\bm{y} <\infty$ for all $r\geq 0$.
\end{remark}

\subsection{Some useful properties of P\'olya-Gamma distribution}
\label{app_pg_props}

\begin{lemma}
\label{lem_exp}
If $\omega\sim\text{PG}\left(1,b\right)$, $E\left(\omega\right)\leq\frac{1}{4}$.
\end{lemma}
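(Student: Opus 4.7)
The plan is to reduce the bound $E(\omega)\leq 1/4$ to the closed-form mean $E(\omega) = \tanh(b/2)/(2b)$ (with the value $1/4$ understood as the $b\downarrow 0$ limit) and then invoke the elementary hyperbolic inequality $\tanh(x)\leq x$ for $x\geq 0$. The task is essentially to (i) compute the mean of $PG(1,b)$ in closed form and (ii) estimate that closed form.

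For step (i), I would compute the Laplace transform $\psi(t)=E(e^{-t\omega})$ and differentiate at $t=0$. Looking at the series density in \eqref{eq_polya} with $a=1$, the $b$-dependence factors as an exponential tilt, $f(w|1,b)=\cosh(b/2)\,e^{-b^{2}w/2}\,f(w|1,0)$, so the Laplace transform becomes
\[
\psi(t) \;=\; \cosh(b/2)\sum_{n=0}^{\infty}(-1)^{n}\int_{0}^{\infty} e^{-(t+b^{2}/2)w}\frac{(2n+1)}{\sqrt{2\pi w^{3}}}e^{-(2n+1)^{2}/(8w)}\,dw.
\]
Each integrand is (twice) the density of the first passage time of standard Brownian motion to level $(2n+1)/2$, whose Laplace transform equals $\exp\bigl(-(2n+1)\sqrt{(2t+b^{2})/2}/\,2\cdot 2\bigr)$; summing the resulting alternating geometric series in $e^{-\sqrt{2t+b^{2}}/2}$ collapses everything to
\[
\psi(t) \;=\; \frac{\cosh(b/2)}{\cosh\!\bigl(\sqrt{2t+b^{2}}/2\bigr)},
\]
which, reassuringly, equals $1$ at $t=0$. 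Differentiating and setting $t=0$ gives $E(\omega)=-\psi'(0)=\tanh(b/2)/(2b)$.

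For step (ii), I would note that the inequality $E(\omega)\leq 1/4$ is equivalent to $\tanh(b/2)\leq b/2$. This is standard: the function $g(x)=x-\tanh(x)$ satisfies $g(0)=0$ and $g'(x)=\tanh^{2}(x)\geq 0$, so $g\geq 0$ on $[0,\infty)$. For the boundary case $b=0$, I would just pass to the limit $\tanh(b/2)/(2b)\to 1/4$ using the Taylor expansion $\tanh(x)=x-x^{3}/3+O(x^{5})$; the bound is then attained with equality.

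The main obstacle is the Laplace-transform calculation: one needs to recognize the individual summands of \eqref{eq_polya} as constant multiples of Brownian first-passage densities in order to collapse the series. Once the identity $\psi(t)=\cosh(b/2)/\cosh(\sqrt{2t+b^{2}}/2)$ is in hand, everything else is a one-line differentiation plus an elementary convexity-type estimate. An alternative that sidesteps the first-passage interpretation is to use the infinite-sum representation $\omega\stackrel{d}{=}(2\pi^{2})^{-1}\sum_{k\geq 1}g_{k}/\bigl((k-1/2)^{2}+b^{2}/(4\pi^{2})\bigr)$ with $g_{k}$ i.i.d.\ $\mathrm{Exp}(1)$, together with the identity $\sum_{k\geq 1}((k-1/2)^{2}+a^{2})^{-1}=(\pi/(2a))\tanh(\pi a)$; term-by-term this yields the same closed form $\tanh(b/2)/(2b)$ and the same conclusion.
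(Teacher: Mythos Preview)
Your proposal is correct and follows the same overall plan as the paper: obtain the closed-form mean of $PG(1,b)$ and then bound it by elementary calculus. The execution differs in two places. First, the paper simply quotes $E(\omega)=\dfrac{1}{2b}\dfrac{e^{b}-1}{e^{b}+1}$ from \cite{polson2013bayesian}, whereas you re-derive this identity from scratch via the Laplace transform (or the Gamma-sum representation); your derivation is valid but not needed here. Second, for the bounding step the paper shows that $f(x)=(e^{x}-1)/[x(e^{x}+1)]$ is nonincreasing by analyzing the sign of $f'(x)$ through the auxiliary function $f_{1}(x)=2xe^{x}-e^{2x}+1$, and then takes the limit $x\to 0$. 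Your route is shorter: rewriting the mean as $\tanh(b/2)/(2b)$ and invoking $\tanh(x)\le x$ (since $g(x)=x-\tanh(x)$ has $g(0)=0$ and $g'(x)=\tanh^{2}(x)\ge 0$) gives the inequality in one line. Your argument yields the same conclusion with less work on step~(ii), at the cost of more work on step~(i) that the paper avoids by citation.
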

\begin{proof}
From \cite{polson2013bayesian},  we know that
\[
E\left(\omega\right)=\frac{1}{2b}\frac{e^{b}-1}{e^{b}+1}.
\]
Consider the function $f\left(x\right)=(e^{x}-1)/\left[x(e^{x}+1)\right]$, then
\[
f^{\prime}\left(x\right)=\frac{2xe^{x}-e^{2x}+1}{\left[x\left(e^{x}+1\right)\right]^{2}}.
\]
Consider another function $f_{1}\left(x\right)=2xe^{x}-e^{2x}+1$. We
have $f_{1}^{\prime}\left(x\right)=2e^{x}\left(1+x-e^{x}\right)$. We know that $1+x-e^{x}\leq0$ for $x\geq0$. So
$f_{1}^{\prime}\left(x\right)\leq0$ for $x\geq0$. Hence
$f_{1}\left(x\right)\leq f_{1}\left(0\right)=0$. Therefore,
$f^{\prime}\left(x\right)\leq0$ for $x\geq 0$. Then for $x\geq 0$,
$f\left(x\right)\leq \lim_{x\rightarrow0} f(x)=1/2$. So
$E\left(\omega\right)\leq1/4$.
\end{proof}

\begin{lemma}
\label{lem_expinv}
If $\omega\sim\text{PG}\left(1,b\right)$,  for $0<s\leq1$,
\[
E\left(\omega^{-s}\right)\leq2^{s}b^s+L\left(s\right),
\]
 where $L\left(s\right)$ is a constant depending on $s$.
\end{lemma}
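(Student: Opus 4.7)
My plan is to convert $E(\omega^{-s})$ into a one-dimensional integral via the Laplace transform. Using the identity $w^{-s} = \Gamma(s)^{-1}\int_0^\infty t^{s-1}e^{-tw}\,dt$ (valid for $w>0$, $s>0$) together with Tonelli's theorem and the known Laplace transform of $PG(1,b)$, $E(e^{-t\omega}) = \cosh(b/2)/\cosh(\sqrt{b^2+2t}/2)$, I obtain
\[
E(\omega^{-s}) = \frac{1}{\Gamma(s)}\int_0^\infty t^{s-1}\,\frac{\cosh(b/2)}{\cosh(\sqrt{b^2+2t}/2)}\,dt.
\]
I then substitute $r = \sqrt{b^2+2t}-b$, so that $t = br + r^2/2$, $dt=(b+r)\,dr$, and the integrand becomes $r^{s-1}(b+r/2)^{s-1}(b+r)\cdot\cosh(b/2)/\cosh((b+r)/2)$ integrated over $r\in[0,\infty)$.

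The crucial step is a sharp upper bound on the cosh ratio. Using the addition formula $\cosh((b+r)/2) = \cosh(b/2)\cosh(r/2)+\sinh(b/2)\sinh(r/2)$, expanding the $r/2$ hyperbolics in exponentials, and dropping the nonnegative $e^{-r/2}$ piece of the denominator, I get
\[
\frac{\cosh(b/2)}{\cosh((b+r)/2)} = \frac{1}{\cosh(r/2)+\tanh(b/2)\sinh(r/2)} \leq \frac{2\,e^{-r/2}}{1+\tanh(b/2)} = (1+e^{-b})\,e^{-r/2},
\]
where the last equality uses the algebraic identity $2/(1+\tanh(b/2)) = 1+e^{-b}$. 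The prefactor $1+e^{-b}$ tends to $1$ as $b\to\infty$, which is what will keep the leading coefficient at $2^s$ rather than $2^{s+1}$.

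Next I estimate $(b+r/2)^{s-1}(b+r)$ by the decomposition $(b+r) = (b+r/2)+r/2$, writing it as $(b+r/2)^s + (r/2)(b+r/2)^{s-1}$. For $0<s\leq 1$, the subadditivity $(b+r/2)^s\leq b^s+(r/2)^s$ combined with $(b+r/2)^{s-1}\leq (r/2)^{s-1}$ (valid since the exponent is nonpositive) yields $(b+r)(b+r/2)^{s-1}\leq b^s + 2^{1-s}r^s$. Multiplying by $r^{s-1}e^{-r/2}$ and evaluating the resulting gamma integrals (e.g.\ via $u=r/2$) gives
\[
E(\omega^{-s}) \leq (1+e^{-b})\Bigl(2^s b^s + \frac{2^{s+1}\Gamma(2s)}{\Gamma(s)}\Bigr).
\]
Finally I peel off the leading $2^s b^s$: the residual $2^s b^s e^{-b}$ is bounded over $b\geq 0$ by $2^s s^s e^{-s}$ (elementary calculus), and $(1+e^{-b})\cdot 2^{s+1}\Gamma(2s)/\Gamma(s)\leq 2^{s+2}\Gamma(2s)/\Gamma(s)$, giving the lemma with $L(s) = 2^s s^s e^{-s} + 2^{s+2}\Gamma(2s)/\Gamma(s)$.

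The step I expect to be the main obstacle is the sharp cosh-ratio bound. The naive estimate $1/\cosh(x)\leq 2e^{-x}$ applied directly to $\cosh((b+r)/2)$ already yields $\cosh(b/2)/\cosh((b+r)/2)\leq 2e^{-r/2}$, enough to prove $E(\omega^{-s}) = O(b^s)$ but with leading coefficient $2^{s+1}$, a factor of $2$ too large to meet the claim. Recognizing that the sharper factor $1+e^{-b}$ can be extracted via the identity $2/(1+\tanh(b/2))=1+e^{-b}$, and that this factor contributes only to the bounded remainder $L(s)$ rather than to the $b^s$-term, is the insight that produces the precise leading constant $2^s$ stated in the lemma.
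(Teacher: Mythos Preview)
Your proof is correct and takes a genuinely different, more elementary route than the paper's. The paper works from the explicit series density of $PG(1,b)$: it treats $b=0$ and $b\neq 0$ separately, for $b\neq 0$ evaluates the term-by-term integrals via modified Bessel functions $K_{s+1/2}$, bounds these using an integral representation, resums the resulting alternating series with the Lerch transcendent, and finally controls the residual by a Dominated Convergence argument that is nonconstructive in $L(s)$. You bypass all of this by combining the Gamma-integral identity for $\omega^{-s}$ with the closed-form Laplace transform $E(e^{-t\omega})=\cosh(b/2)/\cosh\bigl(\sqrt{b^2+2t}/2\bigr)$, reducing everything to one elementary integral after the substitution $r=\sqrt{b^2+2t}-b$. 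Interestingly, both routes land on the same $(1+e^{-b})$ prefactor and the same constant piece $2^{s+1}\Gamma(2s)/\Gamma(s)=2^{s}\Gamma(2s+1)/\Gamma(s+1)$ before taking $1+e^{-b}\leq 2$. What your approach buys is brevity, no special functions, a uniform treatment of $b=0$ and $b>0$, and an explicit constant $L(s)=2^{s}s^{s}e^{-s}+2^{s+2}\Gamma(2s)/\Gamma(s)$; what the paper's approach buys is that it proceeds entirely from the density and therefore does not require citing the Laplace-transform formula as an input.
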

\begin{proof}

From \eqref{eq_polya}, the probability density function of $\text{PG}\left(1,b\right)$ is,
\[
f\left(x|1,b\right)=\cosh\left(b/2\right)\sum_{n=0}^{\infty}\left(-1\right)^{n}\frac{\left(2n+1\right)}{\sqrt{2\text{\ensuremath{\pi}}x^{3}}}e^{-\frac{\left(2n+1\right)^{2}}{8x}-\frac{b^{2}}{2}x}.
\]
We consider the two cases, $ b = 0$ and $b\neq 0$ separately.

\noindent \textbf{Case 1}: $ b = 0$.
Since $0 < s \leq 1$, for any $x >0$, $x^{-s} \leq x^{-1} + 1$. Thus
\begin{align*}
E\left(\omega^{-s}\right) & \leq\int_{0}^{\infty}\left(x^{-1}+1\right)f\left(x|1,0\right)dx=\int_{0}^{\infty}x^{-1}f\left(x|1,0\right)dx+1.
\end{align*}
Now
\begin{align*}
\int_{0}^{\infty}x^{-1}f\left(x|1,0\right)dx & =\sum_{n=0}^{\infty}\left(-1\right)^{n}\frac{\left(2n+1\right)}{\sqrt{2\pi}}\int_{0}^{\infty}x^{-5/2}e^{-\frac{\left(2n+1\right)^{2}}{8x}}dx\\
 & =2^{3}\sum_{n=0}^{\infty}\left(-1\right)^{n}\frac{1}{\left(2n+1\right)^{2}}=8C,
\end{align*}
where $C$ is Catalan's constant. Hence $E\left(\omega^{-s}\right)\leq8C+1$.

\noindent\textbf{Case 2}:  $b\neq 0$.
Note that,
\begin{align*}
E\left(\omega^{-s}\right) & =\int_{0}^{\infty}x^{-s}f\left(x|1,b\right)dx\\
 & =\cosh\left(b/2\right)\sum_{n=0}^{\infty}\left(-1\right)^{n}\frac{\left(2n+1\right)}{\sqrt{2\pi}}\int_{0}^{\infty}\frac{1}{\sqrt{x^{3}}}x^{-s}e^{-\frac{\left(2n+1\right)^{2}}{8x}-\frac{b^{2}}{2}x}dx.
\end{align*}
According to \cite{olver2010math}[10.32.10], we have \begin{align*}
\int_{0}^{\infty}\frac{1}{\sqrt{x^{3}}}x^{-s}e^{-\frac{\left(2n+1\right)^{2}}{8x}-\frac{b^{2}}{2}x}dx & =\int_{0}^{\infty}x^{-s-\frac{3}{2}}e^{-\frac{\left(2n+1\right)^{2}}{8x}-\frac{b^{2}}{2}x}dx\\
 & =2K_{s+\frac{1}{2}}\left(\frac{b\left(2n+1\right)}{2}\right)\cdot\left(\frac{2b}{2n+1}\right)^{s+\frac{1}{2}},
\end{align*}
where $K_{\nu} (x)$ is the modified Bessel function of the second kind. For $x>0$, according to \cite{olver2010math}[10.32.8],
\begin{align*}
 K_{s+\frac{1}{2}}\left(x\right) & =\frac{\sqrt{\pi}\left(\frac{1}{2}x\right)^{s+1/2}}{\Gamma\left(s+1\right)}\int_{1}^{\infty}e^{-xt}\left(t^{2}-1\right)^{s}dt\\
 & =\frac{\sqrt{\pi}\left(\frac{1}{2}x\right)^{s+1/2}}{\Gamma\left(s+1\right)}e^{-x}\int_{0}^{\infty}e^{-xt}\left(t^{2}+2t\right)^{s}dt\\
 & \leq\frac{\sqrt{\pi}\left(\frac{1}{2}x\right)^{s+1/2}}{\Gamma\left(s+1\right)}e^{-x}\int_{0}^{\infty}e^{-xt}\left(t^{2s}+2^{s}t^{s}\right)dt\\
 & =\frac{\sqrt{\pi}\left(\frac{1}{2}x\right)^{s+1/2}}{\Gamma\left(s+1\right)}e^{-x}\left(\frac{\Gamma\left(2s+1\right)}{x^{2s+1}}+2^{s}\frac{\Gamma\left(s+1\right)}{x^{s+1}}\right)\\
 & =\sqrt{\pi}e^{-x}\left[\frac{\Gamma\left(2s+1\right)}{\Gamma\left(s+1\right)}2^{-s-1/2}x^{-s-1/2}+ 2^{-1/2}x^{-1/2}\right].
\end{align*}
Thus
\begin{align*}
2K_{s+\frac{1}{2}}\left(\frac{b\left(2n+1\right)}{2}\right)\cdot\left(\frac{2b}{2n+1}\right)^{s+\frac{1}{2}} & \leq2\sqrt{\pi}\exp\left(-nb-b/2\right)\\
 & \left[\frac{\Gamma\left(2s+1\right)}{\Gamma\left(s+1\right)}2^{s+1/2}\frac{1}{\left(2n+1\right)^{2s+1}}+2^{s+1/2}\frac{b^{s}}{\left(2n+1\right)^{s+1}}\right].
\end{align*}
Recall that $\cosh(b/2) = (e^{b/2} + e^{-b/2})/2$. Thus
\[
E\left(\frac{1}{\omega^{s}}\right)\leq\frac{1+e^{-b}}{2}\sum_{n=0}^{\infty}\left(-1\right)^{n}e^{-nb}\left[\frac{\Gamma\left(2s+1\right)}{\Gamma\left(s+1\right)}2^{s+1}\frac{1}{\left(2n+1\right)^{2s}}+2^{s+1}\frac{b^{s}}{\left(2n+1\right)^{s}}\right].
\]
Also,
\[
\sum_{n=0}^{\infty}\left(-e^{-b}\right)^{n}\frac{1}{\left(2n+1\right)^{s}}=2^{-s}\Phi\left(-e^{-b},s,\frac{1}{2}\right)=2^{-s}\frac{1}{\Gamma\left(s\right)}\int_{0}^{\infty}\frac{t^{s-1}e^{-\frac{1}{2}t}}{1+e^{-b-t}}dt,
\]
and
\begin{equation}
\label{eq:lerch}
\sum_{n=0}^{\infty}\left(-e^{-b}\right)^{n}\frac{1}{\left(2n+1\right)^{2s}}=2^{-2s}\Phi\left(-e^{-b},2s,\frac{1}{2}\right)=2^{-2s}\frac{1}{\Gamma\left(2s\right)}\int_{0}^{\infty}\frac{t^{2s-1}e^{-\frac{1}{2}t}}{1+e^{-b-t}}dt\leq1.
\end{equation}
where $\Phi(\cdot)$ is the Lerch transcendent function. The inequality
in \eqref{eq:lerch} follows from the fact that
$1 + e^{-b - t} \geq 1$. Thus we have,
\[
E\left(\frac{1}{\omega^{s}}\right)\leq\left(1+e^{-b}\right)\frac{b^{s}}{\Gamma\left(s\right)}\int_{0}^{\infty}\frac{t^{s-1}e^{-\frac{1}{2}t}}{1+e^{-b-t}}dt+2^{s+1}\frac{\Gamma\left(2s+1\right)}{\Gamma\left(s+1\right)}.
\]
For fixed $s>0$, let
\[f\left(b\right)
  \equiv\left(1+e^{-b}\right)\frac{b^{s}}{\Gamma\left(s\right)}\int_{0}^{\infty}\frac{t^{s-1}e^{-\frac{1}{2}t}}{1+e^{-b-t}}dt-2^{s}b^{s}.\]
Using the Dominated Convergence Theorem (DCT), we can show that
$f\left(b\right)$ is a continuous function of $b$. DCT can also be used to
show that $\lim_{b\rightarrow\infty}f\left(b\right)=0$ and
$f\left(0\right)=0$. So $\left|f\left(b\right)\right|$ can be bounded
by a positive constant value $f_{0}$. Thus we have
\[
E\left(\frac{1}{\omega^{s}}\right)\leq2^{s}b^{s}+2^{s+1}\frac{\Gamma\left(2s+1\right)}{\Gamma\left(s+1\right)}+f_{0}.
\]
Combining the two cases $b=0$ and $b\neq 0$, we have
\[
E\left(\frac{1}{\omega^{s}}\right)\leq2^{s}b^{s}+L\left(s\right),
\]
where $L\left(s\right)=\max\left\{ 2^{s+1}\frac{\Gamma\left(2s+1\right)}{\Gamma\left(s+1\right)}+f_{0},8C+1\right\}$.
\end{proof}

\subsection{A matrix result}
\label{app_matrix}

\begin{lemma}
\label{lem_rho}
For fixed
$\bm{\omega} = \left( \omega_1,\dots, \omega_n\right) \in
\mathbb{R}_{+}^{n}$, define $\bm{\Omega}$ to be the $n\times n$
diagonal matrix whose $i$th diagonal element is $\omega_i$. Let $\bm{1}$
be the $n\times 1$ vector of $1$'s. For a full rank $n\times p$ matrix
$\bm{Z}$, if there exists a positive $n\times1$ vector $\bm{e} = (e_1, e_2, \dots, e_n)$ such
that $\bm{Z}^{T}\bm{e}=\bm{0}$, then there exists a constant
$\rho_1 \in [0,1)$ such that
\[
\bm{1}^{T}\bm{Z}\left(\bm{Z}^{T}\bm{\Omega}\bm{Z}\right)^{-1}\bm{Z}^{T}\bm{1}\text{\ensuremath{\leq}}\rho_{1}\sum_{i=1}^{n}\frac{1}{\omega_{i}}.
\]
\end{lemma}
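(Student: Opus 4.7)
The plan is to reformulate the inequality in terms of an orthogonal projection and then exploit the existence of the positive vector $\bm{e}$ to bound a projection residual away from zero. Define $H := \bm{\Omega}^{1/2}\bm{Z}(\bm{Z}^T\bm{\Omega}\bm{Z})^{-1}\bm{Z}^T\bm{\Omega}^{1/2}$, which (since $\bm{Z}$ has full column rank) is the orthogonal projection onto the column space $\mathcal{C}(\bm{\Omega}^{1/2}\bm{Z})$. Setting $v := \bm{\Omega}^{-1/2}\bm{1}$, the left-hand side of the claimed inequality is $v^T H v$, while $\sum_{i=1}^n 1/\omega_i = v^Tv$. Thus it suffices to produce a constant $\rho_1\in[0,1)$, independent of $\bm{\omega}$, such that $v^THv\le\rho_1\,v^Tv$, or equivalently
\[
\|(I-H)v\|^2 \;\ge\; (1-\rho_1)\,\|v\|^2 .
\]

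The second step is to use the hypothesis $\bm{Z}^T\bm{e}=\bm{0}$ to produce an explicit vector in $\mathcal{C}(\bm{\Omega}^{1/2}\bm{Z})^{\perp}$. Indeed, $u := \bm{\Omega}^{-1/2}\bm{e}$ satisfies $\bm{Z}^T\bm{\Omega}^{1/2}u=\bm{Z}^T\bm{e}=\bm{0}$, so $u\in\ker(\bm{Z}^T\bm{\Omega}^{1/2})=\mathcal{C}(\bm{\Omega}^{1/2}\bm{Z})^\perp$, which means $(I-H)u=u$. Consequently $(I-H)v$ has norm at least the length of its orthogonal projection onto the line spanned by $u$, giving the key estimate
\[
\|(I-H)v\|^2 \;\ge\; \frac{(v^T u)^2}{\|u\|^2} \;=\; \frac{\bigl(\sum_{i=1}^n e_i/\omega_i\bigr)^2}{\sum_{i=1}^n e_i^2/\omega_i}.
\]

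The third step is elementary: since every $e_i$ is a fixed positive number, let $e_{\min}:=\min_i e_i>0$ and $e_{\max}:=\max_i e_i<\infty$. Writing $t_i := 1/\omega_i>0$, one has $\sum_i e_i t_i \ge e_{\min}\sum_i t_i$ and $\sum_i e_i^2 t_i \le e_{\max}^2\sum_i t_i$, so
\[
\frac{\|(I-H)v\|^2}{\|v\|^2} \;\ge\; \frac{\bigl(\sum_i e_i t_i\bigr)^2}{\bigl(\sum_i e_i^2 t_i\bigr)\bigl(\sum_i t_i\bigr)} \;\ge\; \frac{e_{\min}^2}{e_{\max}^2},
\]
uniformly in $\bm{\omega}$. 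Setting $\rho_1 := 1-(e_{\min}/e_{\max})^2 \in [0,1)$ then yields the lemma.

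The only substantive step is the second one --- recognizing that the hypothesis produces a concrete vector $\bm{\Omega}^{-1/2}\bm{e}$ in the orthogonal complement of $\mathcal{C}(\bm{\Omega}^{1/2}\bm{Z})$, which is what makes the residual $\|(I-H)v\|^2$ comparable to $\|v\|^2$ uniformly in $\bm{\omega}$; without the positivity of $\bm{e}$ one could only conclude $\rho_1\le 1$, which is useless. The Cauchy--Schwarz style bound in step three and the projection reformulation in step one are routine, and no restriction on $\bm{\Omega}$ beyond positivity of its diagonal is ever used.
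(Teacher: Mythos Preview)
Your proof is correct and takes a genuinely different, more direct route than the paper's. The paper normalizes by setting $\lambda_i = \omega_i^{-1/2}/(\sum_j \omega_j^{-1})^{1/2}$, observes that the ratio in question equals $\bm{\lambda}^T \tilde{\bm{Z}}(\tilde{\bm{Z}}^T\tilde{\bm{Z}})^{-1}\tilde{\bm{Z}}^T\bm{\lambda}$ with $\tilde{\bm{Z}} = \bm{\Lambda}^{-1}\bm{Z}$, shows this is strictly less than $1$ for every $\bm{\lambda}$ in the open positive unit sphere $S$, and then carries out a somewhat delicate limiting argument to extend the function continuously to the compact closure $S^*$ so that the supremum is attained and hence strictly below $1$. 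In particular the paper never produces an explicit value of $\rho_1$; it is defined as a supremum.

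Your argument avoids the compactness machinery entirely. By recognizing that $u=\bm{\Omega}^{-1/2}\bm{e}$ lies in $\mathcal{C}(\bm{\Omega}^{1/2}\bm{Z})^\perp$, you lower-bound the projection residual $\|(I-H)v\|^2$ by the squared component of $v$ along $u$, and then a straightforward estimate with $e_{\min},e_{\max}$ gives the explicit constant $\rho_1 = 1-(e_{\min}/e_{\max})^2$. This is shorter, elementary, and constructive, which is a real advantage if one later wants quantitative drift bounds. The paper's approach, on the other hand, yields the sharpest possible $\rho_1$ (the actual supremum) rather than the possibly crude bound $1-(e_{\min}/e_{\max})^2$, though for the purposes of establishing geometric ergodicity any $\rho_1<1$ suffices.
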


\begin{proof}
Let $\bm{\lambda}=\left(\lambda_{1,}\dots,\lambda_{n}\right)^{T} \in \mathbb{R}_+^n$,
where $\lambda_{i}=(1/\sqrt{\omega_{i}})/\sqrt{\sum_{i=1}^{n}(1/\omega_{i}})$,
and $\bm{\Lambda}=\text{diag}\left(\lambda_{1,},\dots,\lambda_{n}\right)$.
Define
\[S=\left\{ \bm{x}=\left(x_{1},\dots,x_{n}\right)^{T}:x_{i}\in\left(0,\infty\right) \text{for }i=1,\dots,n,\left\Vert \bm{x}\right\Vert =1\right\}, \]
and \\
\[S^{*}=\left\{ \bm{x}=\left(x_{1},\dots,x_{n}\right)^{T}:x_{i}\in\left[0,\infty\right)\text{for }i=1,\dots,n,\left\Vert \bm{x}\right\Vert =1\right\}. \]
The set $S^*$ is a compact set. Note that
\begin{equation}
  \label{eq:wlam}
\sup_{\bm{\omega}\in\mathbb{R}_{+}^{n}}\frac{\bm{1}^{T}\bm{Z}\left(\bm{Z}^{T}\bm{\Omega}\bm{Z}\right)^{-1}\bm{Z}^{T}\bm{1}}{\sum_{i=1}^n 1/\omega_{i}}=\sup_{\bm{\lambda}\in S}\bm{1}^{T}\bm{Z}\left(\bm{Z}^{T}\bm{\Lambda}^{-2}\bm{Z}\right)^{-1}\bm{Z}^{T}\bm{1}.  
\end{equation}
Now we study the supremum of
$\bm{1}^{T}\bm{Z}\left(\bm{Z}^{T}\bm{\Lambda}^{-2}\bm{Z}\right)^{-1}\bm{Z}^{T}\bm{1}$
over $\bm{\lambda} \in S$. We know that
$\bm{1}^{T}\bm{Z}\left(\bm{Z}^{T}\bm{\Lambda}^{-2}\bm{Z}\right)^{-1}\bm{Z}^{T}\bm{1}$
is a continuous function of $\bm{\lambda}$ in $S$. For
$\bm{\lambda}\in S^{*}\backslash S$, there exists a sequence
$\left\{ \bm{\lambda}_{m} \equiv (\lambda_{1,m},\dots, \lambda_{n,m})^T\in
  S\right\} _{m=1}^{\infty}$ such that
$\lim_{m\rightarrow\infty}\bm{\lambda}_{m}=\bm{\lambda}$. We define
the function $f(\cdot)$ on $S^{*}$ as
\[
f\left(\bm{\lambda}\right)\equiv\begin{cases}
\bm{1}^{T}\bm{Z}\left(\bm{Z}^{T}\bm{\Lambda}^{-2}\bm{Z}\right)^{-1}\bm{Z}^{T}\bm{1} & \bm{\lambda}\in S\\
\lim_{m\rightarrow\infty}\bm{1}^{T}\bm{Z}\left(\bm{Z}^{T}\bm{\Lambda}_{m}^{-2}\bm{Z}\right)^{-1}\bm{Z}^{T}\bm{1} & \bm{\lambda}\in S^{*}\backslash S,
\end{cases}
\]
where $\bm{\Lambda}_m = \text{diag}(\lambda_{1,m},\dots,\lambda_{n,m})$ and $\lim_{m\rightarrow\infty}\bm{\lambda}_{m}=\bm{\lambda}\in S^{*}\backslash S$ with $\bm{\lambda}_{m}\in S$. Then $f\left(\bm{\lambda}\right)$ is a continuous function on $S^{*}$. Also
\begin{equation}
\label{eq:supremum}
\sup_{\bm{\lambda}\in S}\bm{1}^{T}\bm{Z}\left(\bm{Z}^{T}\bm{\Lambda}^{-2}\bm{Z}\right)^{-1}\bm{Z}^{T}\bm{1}\leq\sup_{\bm{\lambda}\in S^{*}}f\left(\bm{\lambda}\right).
\end{equation}

We will now show that
$\sup_{\bm{\lambda}\in S^{*}}f\left(\bm{\lambda}\right) < 1$. First we
show that for any $\bm{\lambda} \in S$, $f(\bm{\lambda}) < 1$. Define
$\tilde{\bm{Z}} \equiv\bm{\Lambda}^{-1}\bm{Z}$, then
\begin{align}
\label{eq:trans}
\bm{1}^{T}\bm{Z}\left(\bm{Z}^{T}\bm{\Lambda}^{-2}\bm{Z}\right)^{-1}\bm{Z}^{T}\bm{1} & =\bm{1}^{T}\bm{\Lambda}\bm{\Lambda}^{-1}\bm{Z}\left(\bm{Z}^{T}\bm{\Lambda}^{-2}\bm{Z}\right)^{-1}\bm{Z}^{T}\bm{\Lambda}^{-1}\bm{\Lambda}\bm{1} \nonumber \\
 & =\bm{1}^{T}\bm{\Lambda}\tilde{\bm{Z}}\left(\tilde{\bm{Z}}^{T}\tilde{\bm{Z}}\right)^{-1}\tilde{\bm{Z}}^{T}\bm{\Lambda}\bm{1}=\bm{\lambda}^{T}\tilde{\bm{Z}}\left(\tilde{\bm{Z}}^{T}\tilde{\bm{Z}}\right)^{-1}\tilde{\bm{Z}}^{T}\bm{\lambda}.
\end{align}
Since by the assumption of Lemma \ref{lem_rho}, there exists a
positive vector $\bm{e}$ such that $\bm{Z}^T\bm{e} = \bm{0}$, we have
$\tilde{\bm{Z}}^T \bm{\Lambda} \bm{e} = \bm{Z}^T\bm{\Lambda}^{-1}
\bm{\Lambda}e = \bm{Z}^T \bm{e} = \bm{0}$.  Thus
$\tilde{\bm{Z}}\left(\tilde{\bm{Z}}^{T}\tilde{\bm{Z}}\right)^{-1}\tilde{\bm{Z}}^{T}\bm{\Lambda}
\bm{e} = \bm{0}$. In other words, $\bm{\Lambda}\bm{e}$ is an
eigenvector of
$\tilde{\bm{Z}}\left(\tilde{\bm{Z}}^{T}\tilde{\bm{Z}}\right)^{-1}\tilde{\bm{Z}}^{T}$
corresponding to eigenvalue zero. Since
$\bm{e}^T \bm{\Lambda} \bm{\lambda} = \sum_{i=1}^n \lambda_i^2e_i >0$,
and
$\tilde{\bm{Z}}\left(\tilde{\bm{Z}}^{T}\tilde{\bm{Z}}\right)^{-1}\tilde{\bm{Z}}^{T}$
is an idempotent matrix, it implies that $\bm{\lambda}$ cannot be an
eigenvector of
$\tilde{\bm{Z}}\left(\tilde{\bm{Z}}^{T}\tilde{\bm{Z}}\right)^{-1}\tilde{\bm{Z}}^{T}$
corresponding to eigenvalue 1 \citep[][Proposition 4.5.4]{bernstein2005matrix}. Thus
$\bm{\lambda}^T\tilde{\bm{Z}}\left(\tilde{\bm{Z}}^{T}\tilde{\bm{Z}}\right)^{-1}\tilde{\bm{Z}}^{T}
\bm{\lambda} < 1$, that is by \eqref{eq:trans}, $f(\bm{\lambda}) < 1$
for any $\bm{\lambda} \in S$.

Next we show that or any $\bm{\lambda} \in S^* \backslash S$,
$f(\bm{\lambda}) < 1$. Define
$\tilde{\bm{Z}}_m \equiv \bm{\Lambda}_m^{-1}\bm{Z}$. Now, we will show
that
$\lim_{m\rightarrow\infty}\tilde{\bm{Z}}_{m}\left(\tilde{\bm{Z}}_{m}^{T}\tilde{\bm{Z}}_{m}\right)^{-1}\tilde{\bm{Z}}_{m}^{T}$
exists. Define
$\bm{P}_{m}\equiv\tilde{\bm{Z}}_{m}\left(\tilde{\bm{Z}}_{m}^{T}\tilde{\bm{Z}}_{m}\right)^{-1}\tilde{\bm{Z}}_{m}^{T}$. We
will show that each element in $\bm{P}_{m}$ is bounded by 1. Let
$\bm{Z}\equiv\left(\bm{z}_{1},\dots,\bm{z}_{n}\right)^{T}$, then
$\tilde{\bm{Z}}_{m}=\left(\lambda_{1,m}^{-1}\bm{z}_{1},\dots,\lambda_{n,m}^{-1}\bm{z}_{n}\right)^{T}$. The
$\left(i,j\right)$th element of $\bm{P}_{m}$ is
$\lambda_{i,m}^{-1}\lambda_{j,m}^{-1}\bm{z}_{i}^{T}\left(\tilde{\bm{Z}}_{m}^{T}\tilde{\bm{Z}}_{m}\right)^{-1}\bm{z}_{j}$.
For $i=j$, using the inequality in \cite{roy2010monte} [Lemma 3], the $i$th
diagonal element of $\bm{P}_{m}$ is
\[
\lambda_{i,m}^{-2}\bm{z}_{i}^{T}\left(\tilde{\bm{Z}}_{m}^{T}\tilde{\bm{Z}}_{m}\right)^{-1}\bm{z}_{i}=\lambda_{i,m}^{-2}\bm{z}_{i}^{T}\left(\lambda_{i,m}^{-2}\bm{z}_{i}\bm{z}_{i}^{T}+\sum_{j=1,j\neq i}^{n}\lambda_{j,m}^{-2}\bm{z}_{j}\bm{z}_{j}^{T}\right)^{-1}\bm{z}_{i}\leq1.
\]
For $i\neq j$, by Cauchy\textendash Schwartz inequality
\[
\left|\lambda_{i,m}^{-1}\lambda_{j,m}^{-1}\bm{z}_{i}^{T}\left(\tilde{\bm{Z}}_{m}^{T}\tilde{\bm{Z}}_{m}\right)^{-1}\bm{z}_{j}\right|\leq\sqrt{\lambda_{i,m}^{-2}\bm{z}_{i}^{T}\left(\tilde{\bm{Z}}_{m}^{T}\tilde{\bm{Z}}_{m}\right)^{-1}\bm{z}_{i}}\sqrt{\lambda_{j,m}^{-2}\bm{z}_{j}^{T}\left(\tilde{\bm{Z}}_{m}^{T}\tilde{\bm{Z}}_{m}\right)^{-1}\bm{z}_{j}}\leq1.
\]
Since each element of $\bm{P}_m$ is a bounded, continuous function of
$\bm{\lambda}_m$ over $S$, its limit as $m\rightarrow \infty$ exists
and is bounded. Thus,
$\lim_{m\rightarrow\infty}\tilde{\bm{Z}}_{m}\left(\tilde{\bm{Z}}_{m}^{T}\tilde{\bm{Z}}_{m}\right)^{-1}\tilde{\bm{Z}}_{m}^{T}$
exists, and we denote it as $\bm{P}$. For a matrix $\bm{A}$, define
$\Vert \bm{A} \Vert_2 = \sup_{\bm{x:\Vert x\Vert }= 1} \Vert \bm{A}
\bm{x}\Vert $. Since
$ \Vert \bm{P}_{m}\Vert _{2} \leq \Vert \bm{P}\Vert _{2} + \Vert
\bm{P}_{m}-\bm{P}\Vert _{2}$ and
$\Vert \bm{P}\Vert _{2} \leq \Vert \bm{P}_m\Vert _{2} + \Vert
\bm{P}_{m}-\bm{P}\Vert _{2}$, we have
\begin{equation}
  \label{eq:mtineq}
   \left|\left\Vert \bm{P}_{m}\right\Vert _{2}-\left\Vert
      \bm{P}\right\Vert _{2}\right|\leq\left\Vert
    \bm{P}_{m}-\bm{P}\right\Vert _{2}.
\end{equation}
Since for all $m$, $\left\Vert \bm{P}_{m}\right\Vert _{2} =1,$ being
its largest eigenvalue and
$\left\Vert \bm{P}_{m}-\bm{P}\right\Vert _{2} \rightarrow 0$ as
$m \rightarrow \infty$, \eqref{eq:mtineq} implies that
$\left\Vert \bm{P}\right\Vert _{2} =1$. Thus the maximum eigenvalue of
$\bm{P}$ is 1. Then for any $\bm{\lambda}\in S^{*}\backslash S$ with
$\lim_{m\rightarrow\infty}\bm{\lambda}_{m}=\bm{\lambda}$, we have
\begin{equation}
  \label{eq:lams}
  \lim_{m\rightarrow\infty}\bm{1}^{T}\bm{Z}\left(\bm{Z}^{T}\bm{\Lambda}_{m}^{-2}\bm{Z}\right)^{-1}\bm{Z}^{T}\bm{1}=\lim_{m\rightarrow\infty}\bm{\lambda}_{m}^{T}\tilde{\bm{Z}}_{m}\left(\tilde{\bm{Z}}_{m}^{T}\tilde{\bm{Z}}_{m}\right)^{-1}\tilde{\bm{Z}}_{m}^{T}\bm{\lambda}_{m}=\bm{\lambda}^{T}\bm{P}\bm{\lambda}^{T}.
\end{equation}
Since $\bm{Z}^{T}\bm{e}=\bm{0}$, then
$ \tilde{\bm{Z}}^T_m \bm{\Lambda}_m \bm{e} =
\bm{Z}^{T}\bm{\Lambda}_{m}^{-1}\bm{\Lambda}_{m}\bm{e}=\bm{0}$. Define
$\bm{\Lambda}_{m}\bm{e}=\tilde{\bm{e}}_{m}=\left(\tilde{e}_{m1},\tilde{e}_{m2},\dots,\tilde{e}_{mn}\right)^{T}$,
where $\tilde{e}_{mi}=\lambda_{i,m}e_{i}$ and
$\lim_{m\rightarrow\infty}\tilde{\bm{e}}_{m}=\tilde{\bm{e}}=\left(\lambda_{1}e_{1},\dots,\lambda_{n}e_{n}\right)^T$. So
we have
\[
\bm{P}  \tilde{\bm{e}} = \lim_{m\rightarrow\infty}\tilde{\bm{Z}}_{m}\left(\tilde{\bm{Z}}_{m}^{T}\tilde{\bm{Z}}_{m}\right)^{-1}\tilde{\bm{Z}}_{m}^{T}\bm{\Lambda}_{m}\bm{e} = \bm{0}.
\]
Thus $\tilde{\bm{e}}$ is an eigenvector of $\bm{P}$ corresponding to
eigenvalue 0. We also know that
$\bm{\lambda}^{T}\tilde{\bm{e}}=\sum_{i=1}^{n}\lambda_{i}^{2}e_{i}>0$. So
using similar arguments as before, $\bm{\lambda}$ cannot be an
eigenvector for $\bm{P}$ corresponding to eigenvalue 1. Thus
$\bm{\lambda}^{T}\bm{P}\bm{\lambda}<1$, which by \eqref{eq:lams}
implies $f(\bm{\lambda}) <1$ for any $\bm{\lambda} \in S^* \backslash S$.

Therefore for any $\bm{\lambda}\in S^{*}$, $f\left(\bm{\lambda}\right)<1$. Since $S^{*}$ is a compact set, and $f\left(\bm{\lambda}\right)$ is a continuous function of $\bm{\lambda}$ over $S^{*}$, we have
\[
\sup_{\bm{\lambda}\in S^{*}}f\left(\bm{\lambda}\right)=f\left(\tilde{\bm{\lambda}}\right),\quad\text{for some }\bm{\tilde{\lambda}}\in S^{*}.
\]
Therefore $\sup_{\bm{\lambda}\in S^{*}}f\left(\bm{\lambda}\right)<1$,
which by \eqref{eq:wlam} and \eqref{eq:supremum} in turn implies that
\[
\sup_{\bm{\omega}\in\mathbb{R}_{+}^{n}}\frac{\bm{1}^{T}\bm{Z}\left(\bm{Z}^{T}\bm{\Omega}\bm{Z}\right)^{-1}\bm{Z}^{T}\bm{1}}{\sum_{i=1}^{n}1/\omega_{i}}<1.
\]

Let $\rho_{1}=\sup_{\bm{\omega}\in\mathbb{R}_{+}^{n}}\frac{\bm{1}^{T}\bm{Z}\left(\bm{Z}^{T}\bm{\Omega}\bm{Z}\right)^{-1}\bm{Z}^{T}\bm{1}}{\sum_{i=1}^{n}1/\omega_{i}}$, so we have
\[
\bm{1}^{T}\bm{Z}\left(\bm{Z}^{T}\bm{\Omega}\bm{Z}\right)^{-1}\bm{Z}^{T}\bm{1}\text{\ensuremath{\leq}}\rho_{1}\sum_{i=1}^{n}\frac{1}{\omega_{i}}.
\]

\end{proof}

\bibliographystyle{apalike}
\bibliography{ref_logit_improper}
\end{document}